\newtheorem{definition}{Definition}
\newtheorem{proposition}[definition]{Proposition}
\newtheorem{lemma}[definition]{Lemma}
\newtheorem{theorem}[definition]{Theorem}
\newtheorem{corollary}[definition]{Corollary}
\newtheorem{remark}[definition]{Remark}
\title{From winning strategy to Nash equilibrium}
\author{St\'ephane Le Roux}
\begin{document}
\maketitle

\begin{abstract}
Game theory is usually considered applied mathematics, but a few game-theoretic results, such as Borel determinacy, were developed by mathematicians for mathematics in a broad sense. These results usually state determinacy, \textit{i.e.} the existence of a winning strategy in games that involve two players and two outcomes saying who wins. In a multi-outcome setting, the notion of winning strategy is irrelevant yet usually replaced faithfully with the notion of (pure) Nash equilibrium. This article shows that every determinacy result over an arbitrary game structure, \textit{e.g.} a tree, is transferable into existence of multi-outcome (pure) Nash equilibrium over the same game structure. The equilibrium-transfer theorem requires cardinal or order-theoretic conditions on the strategy sets and the preferences, respectively, whereas counter-examples show that every requirement is relevant, albeit possibly improvable. When the outcomes are finitely many, the proof provides an algorithm computing a Nash equilibrium without significant complexity loss compared to the two-outcome case. As examples of application, this article generalises Borel determinacy, positional determinacy of parity games, and finite-memory determinacy of Muller games.
\end{abstract}

Keywords: Determinacy, Borel, parity games, Muller games, transfer from determinacy to multi-outcome Nash equilibrium

\section{Introduction}\label{sect:intro}

Game theory is the theory of competitive interactions between decision makers having different interests. Its primary purpose is to further understand such real-world interactions through mathematical modelling, so it usually studies games that involve many players and many possible outcomes, which are meant to describe faithfully the many shades of situations involving many stakeholders. Apart from some earlier related works, the field of game theory is usually said to be born in the first part of the 20th century, especially thanks to von Neumann~\cite{NM44}, but also Borel~\cite{Borel21} and some others. Since then, it has been applied to many concrete areas such as economics, political science, evolutionary biology, computer science, \textit{etc}. Conversely, specific problems in these concrete areas have been triggering new general questions and have thus been helpful in developing game theory. 

Surprisingly, game theory has also provided useful point of view and terminology to abstract areas such as logic, descriptive set theory, and theoretical informatics. For instance, Martin~\cite{Martin90} describes the (quasi-)Borel sets \textit{via} the guaranteed existence of winning strategies in some games that are built using these (quasi-)Borel sets. More specifically, these games involve only two players and two outcomes saying who wins, \textit{i.e.} they are win-lose games, and they are built on infinite trees. Similarly, \cite{BL69}, \cite{GH82},\cite{EJ91}, \cite{Mostowski91}, \textit{etc}, relate some statements about logic to the existence/computation of simple winning strategies in some two-player win-lose games that are built on finite graphs, namely Muller games and parity games. In such frameworks, existence of a winning strategy (of some sort) is called determinacy, and the games that enjoy it are said to be determined.

There are substantial differences between the two above-mentioned types of game theory: games for logic are simple in terms of players, outcomes, and preferences, but the underlying structures are complex (since they pertain to the objects under study), whereas it is the other way round with games for economics, where the underlying structures involve, \textit{e.g.}, finiteness and continuous functions. Of course one would wish to get the best of the two worlds and to understand multi-player, multi-outcome games that are built on complex structures.

The above wish may partly come true. Indeed for some games of specific structures, determinacy results have already been generalised by considering the same structures for many player, many outcomes and the usual generalisation of the notion of winning strategy, namely the notion of Nash equilibrium: for instance, quasi-Borel determinacy was generalised in~\cite{SLR13} for (infinitely) many players and (infinitely) many outcomes; similarly, finite-memory determinacy of Muller games was generalised in~\cite{PS09}; and a similar question was asked in~\cite{Ummels05} for parity games. Said otherwise, for specific structures such as infinite trees or finite graphs, existence of winning strategies for all two-player win-lose games that are built on these structures might be transferred, through ad-hoc proofs, into existence of Nash equilibrium for all multi-player multi-outcome games that are built on these same structures. 

This article shows through a single, uniform proof that such a transfer of equilibrium from win-lose to multi-outcome setting holds irrespective of the structure of the game, \textit{i.e.} it need not be a tree, or a graph, \textit{etc.} However, a universal equilibrium-transfer theorem seems to hold for two-player games only, and to fail for three players already, although a clear-cut general counterexample is still missing. Furthermore, the general order-theoretic condition of the theorem, \textit{i.e.} that the preference orders over the outcomes be of (uniformly) finite height, can be relaxed to "inverted" well-foundness when strategy sets are countable. The case in between, where only one strategy set is countable and the inverse relation of preferences are well-founded yet with chains of arbitrary length, is still open.

Note that the Nash equilibria that are considered in this article are all pure since the concept need not be weakened through probabilistic means.

Section~\ref{sect:scgc} introduces the main result of this article intuitively, using an example; Section~\ref{sect:def} defines the required concepts and makes two straightforward remarks; Section~\ref{sect:mr-psc} states the main result of this article, \textit{i.e.} the equilibrium-transfer theorem, and provides a proof in a very simple yet informative case. Section~\ref{sect:tt} presents the equilibrium-transfer theorem and details the algorithmic content of the proof when the outcomes are finitely many; Section~\ref{sect:app-et} invokes the equilibrium-transfer theorem to generalise Martin's theorem on Borel determinacy, positional determinacy of parity games (with infinitely many priorities), and finite-memory determinacy of Muller games; Section~\ref{sect:ltt} gives counterexamples to reasonable candidates to generalise the theorem; and Section~\ref{sect:c} concludes and shows in passing that, generally speaking in game theory, linearly ordered preferences do not account for partially ordered preferences.

\subsection{From simple example to general idea}\label{sect:scgc}

Let us exemplify what the main result of this article, \textit{i.e.} the equilibrium-transfer theorem, can actually do. A finite real-valued two-player game in extensive form is an object built on a finite rooted tree; each internal node is owned by exactly one player, and each leaf encloses a real-valued payoff function, \textit{i.e.} an ordered pair assigning one real number to each player. The leftmost two objects below are such games, the second game being even a win-lose game. Intuitively, the left-most game is played as follows: player $b$ at the root chooses left or right. Right yields payoff $1$ for $a$ and $0$ for $b$, left requires a choice from player $a$, and so on until a leaf is reached. 

\begin{tabular}{ccccc}
\begin{tikzpicture}[level distance=7mm]
\node{b}[sibling distance=8mm]
	child{node{a}[sibling distance=8mm]
		child{node{b}[sibling distance=8mm]
			child{node{$4,2$}}
			child{node{$1,0$}}
		}
		child{node{$3,3$}}
	}
	child{node{$1,0$}};
\end{tikzpicture}
&
\begin{tikzpicture}[level distance=7mm]
\node{b}[sibling distance=8mm]
	child{node{a}[sibling distance=8mm]
		child{node{b}[sibling distance=8mm]
			child{node{$1,0$}}
			child{node{$0,1$}}
		}
		child{node{$1,0$}}
	}
	child{node{$0,1$}};
\end{tikzpicture}
&
\begin{tikzpicture}[level distance=7mm]
\node{b}[sibling distance=8mm]
	child{node{a}[sibling distance=8mm]
		child{node{b}[sibling distance=8mm] edge from parent[double]
			child{node{$4,2$}}
			child{node{$1,0$} edge from parent[double]}
		}
		child{node{$3,3$}}
	}
	child{node{$1,0$}edge from parent[double]};
\end{tikzpicture}
&
\begin{tikzpicture}[level distance=7mm]
\node{b}[sibling distance=8mm]
	child{node{a}[sibling distance=8mm]
		child{node{b}[sibling distance=8mm] edge from parent[double]
			child{node{$1,0$}}
			child{node{$0,1$} edge from parent[double]}
		}
		child{node{$0,1$}}
	}
	child{node{$0,1$} edge from parent[double]};
\end{tikzpicture}
&
\begin{tikzpicture}[level distance=7mm]
\node{b}[sibling distance=8mm]
	child{node{a}[sibling distance=8mm]
		child{node{b}[sibling distance=8mm]
			child{node{$X$}}
			child{node{$Y$}}
		}
		child{node{$Z$}}
	}
	child{node{$Y$}};
\end{tikzpicture}
\end{tabular}

A strategy profile is an ordered pair enclosing one strategy per player, where a strategy of a player is a complete collection of the (unique) choices that the player would make at each node (MODIFY that he/she owns) if the play ever reached this node. The third (resp. fourth) object from the left above represents a strategy profile for the first (resp. second) game, where the double lines represent the strategical choices. A strategy profile induces one unique payoff function, by following the unique choices from the root to the leaves. When assumed that the players prefer greater payoffs for themselves, the third object above is not a Nash equilibrium, because at least one of the player, namely $b$, can improve upon his/her payoff by changing his/her strategy from "right-right" to "left-left" and obtain $2$ instead of $0$. The fourth object above is a Nash equilibrium because no player can improve upon what they already have. Said otherwise, "right-right" is a winning strategy for player $b$.

The rightmost object above, with variables $X$, $Y$ and $Z$, is an abstraction of the leftmost game: it represents the structure of the game. Especially, repetition of the variable $Y$ captures equality between payoff functions in the original game. There are $2^3$ possibilities to instantiate the variables of the game structure with $(1,0)$ and $(0,1)$, one of them being the second game above. There are infinitely many possibilities to instantiate the variables of the game structure with real-valued payoff functions, one of them being the leftmost game above. The two strategy profiles above suggest that there are many more Nash equilibria in win-lose games than in games with real-valued payoff functions. This is actually true, but since all the $2^3$ win-lose games that are derived from the game structure above have a Nash equilibrium (equivalently, a winning strategy), which is easy to check, the equilibrium-transfer theorem, stated in Section~\ref{sect:mr-psc}, implies that all the games with real-valued payoff functions that are derived from the same game structure also have a Nash equilibrium, which is more difficult to check! (Albeit already proved in~\cite{Kuhn53}.)

In the same way, the equilibrium-transfer theorem turns essentially all determinacy results into existence of Nash equilibrium in a two-player, multi-outcome setting. It is applied to Borel determinacy, finite-memory determinacy of Muller game, and positional determinacy of parity games in Section~\ref{sect:app-et}. Apart from the generalisation of positional determinacy of parity games, which is new, the two other obtained results are weaker than existing results; but the key point here, however, is that the three applications are almost effortless, and that the same would hold for further applications.

The equilibrium-transfer theorem relates to all two-player win-lose games where the notion of winning strategy makes sense, namely strategy-based games. Since on the one hand, all strategy-based games are faithfully embeddable into games in normal form, as far as existence of Nash equilibrium is concerned, and since on the other hand, games with abstract outcomes and preferences are much more general than real-valued games, the theorem will be stated for abstract games in normal form.

\subsection{Definitions}\label{sect:def}

Unlike traditional games in normal form, the definition below involves abstract outcomes (instead of mere real-valued payoff functions) and preferences that are arbitrary (instead of transitive, reflexive, total, \textit{etc.}). It is important since there is no reason why games, \textit{e.g.}, with real-valued payoff functions should account for all possible games. For instance, Section~\ref{sect:c} defines a simple preference that has a game-theoretic property relating to Nash equilibrium but whose every linear extension fails to have the same game-theoretic property. 

\begin{definition}[Games in normal form]\label{defn:gnf}
They are tuples $\langle A,(S_a)_{a\in A},O,v,(\prec_a)_{a\in A}\rangle$ satisfying the following: 
\begin{itemize}
\item $A$ is a non-empty set (of players, or agents),
\item $\prod_{a\in A}S_a$ is a non-empty Cartesian product (whose elements are the strategy profiles and where $S_a$ represents the strategies available to player $a$),
\item $O$ is a non-empty set (of possible outcomes),
\item $v:\prod_{a\in A} S_a\to O$ (the outcome function that values the strategy profiles),
\item Each $\prec_a$ is a binary relation over $O$ (modelling the preference of player $a$).
\end{itemize}
\end{definition}

The traditional notion of Nash equilibrium is rephrased below in the abstract setting with a subtle semantic change (but remains the same in extension): each binary relation $\prec_a$, which I call preference, is the complement of the inverse of what is traditionally called preference. 

\begin{definition}[Nash equilibrium]\label{defn:ne}
Let $g=\langle A,(S_a)_{a\in A} ,O,v,(\prec_a)_{a\in A}\rangle$ be a game in normal form. A strategy profile $s$ in $S:=\prod_{a\in A} S_a$ is a Nash equilibrium if it makes every player $a$ stable, \textit{i.e.} $v(s)\not\prec_a v(s')$ for all $s'\in S$ that differ from $s$ at most at the $a$-component.
\[NE(s)\quad:=\quad\forall a\in A,\forall s'\in S,\quad\neg(v(s)\prec_a v(s')\,\wedge\,\forall b\in A-\{a\},\,s_b= s'_b)\]
\end{definition}

Three games in normal form are represented below as arrays. They all involve two players, say $a$ and $b$, two strategies for $a$ (resp. $b$), namely $a_l$ and $a_r$ (resp. $b_l$ and $b_r$), and outcomes in $\mathbb{R}^2$. In this specific case, an outcome is usually called a payoff function because it assigns one payoff to every player. Player $a$ (resp. $b$) prefers payoff functions with greater first (resp. second) component. In the first game, if player $a$ picks the strategy $a_l$ and player $b$ picks $b_l$, the strategy profile $(a_l,b_l)$ then yields payoff $1$ for $a$ and $0$ for $b$. This profile is not a Nash equilibrium because, by changing strategies, player $a$ can convert the profile $(a_l,b_l)$ into $(a_r,b_l)$ and obtain payoff $2$, which is greater than $1$. The game has two Nash equilibria, namely profiles $(a_r,b_l)$ and $(a_l,b_r)$. The second game has no Nash equilibrium and the third game, which  enjoys some symmetry, has two Nash equilibria. These last two games suggest that the notion of Nash equilibrium cannot, at least directly, and even by using probabilities, lead to a notion of best move, \textit{i.e.} of recommendations on how to play.

\begin{displaymath}
\begin{array}{c@{\hspace{1cm}}c@{\hspace{1cm}}c}
\begin{array}{c|c@{,\;}c@{\;\vline\;}c@{,\;}c|}
          \multicolumn{1}{c}{}&
	  \multicolumn{2}{c}{b_l}&
	  \multicolumn{2}{c}{b_r}\\
	  \cline{2-5}
 	  a_l & 1 & 0 & 5 & 0 \\
	  \cline{2-5}
	  a_r & 2 & 4 & 5 & 3\\
	  \cline{2-5}
\end{array}
&
\begin{array}{c|c@{,\;}c@{\;\vline\;}c@{,\;}c|}
          \multicolumn{1}{c}{}&
	  \multicolumn{2}{c}{b_l}&
	  \multicolumn{2}{c}{b_r}\\
	  \cline{2-5}
 	  a_l & 0 & 1 & 1 & 0 \\
	  \cline{2-5}
	  a_r & 1 & 0 & 0 & 1\\
	  \cline{2-5}
	 \end{array}
&
\begin{array}{c|c@{,\;}c@{\;\vline\;}c@{,\;}c|}
          \multicolumn{1}{c}{}&
	  \multicolumn{2}{c}{b_l}&
	  \multicolumn{2}{c}{b_r}\\
	  \cline{2-5}
 	  a_l & 2 & 1 & 0 & 0 \\
	  \cline{2-5}
	  a_r & 0 & 0 & 1 & 2\\
	  \cline{2-5}
	 \end{array}
\end{array}
\end{displaymath}

The two-player win-lose games in normal form, defined below, are special cases of games in normal form.

\begin{definition}[Win-lose games in normal form, winning strategies, and determinacy]\label{def:win-lose}

\begin{itemize}
\item A win-lose game is a game where $A=\{1,2\}$ and $O=\{(1,0),(0,1)\}$ and the preferences are defined by $(0,1)\prec_1(1,0)$ and $(1,0)\prec_2(0,1)$, so all these usually may remain implicit.
\item A winning strategy for player $1$ is a strategy $s_1\in S_1$ such that $v(s_1,s_2)=(1,0)$ for all $s_2\in S_2$. A winning strategy for player $2$ is a strategy $s_2\in S_2$ such that $v(s_1,s_2)=(0,1)$ for all $s_1\in S_1$.
\item A win-lose game such that one player has a winning strategy is said to be determined.
\end{itemize}
\end{definition}

Three win-lose games are represented below. For the first game, player $a$ has the winning strategy $a_r$; for the second game, player $b$ has the winning strategy $b_r$; there is no winning strategy for the third game.

\begin{displaymath}
\begin{array}{c@{\hspace{1cm}}c@{\hspace{1cm}}c}
\begin{array}{c|c@{,\;}c@{\;\vline\;}c@{,\;}c|}
          \multicolumn{1}{c}{}&
	  \multicolumn{2}{c}{b_l}&
	  \multicolumn{2}{c}{b_r}\\
	  \cline{2-5}
 	  a_l & 0 & 1 & 0 & 1 \\
	  \cline{2-5}
	  a_r & 1 & 0 & 1 & 0\\
	  \cline{2-5}
\end{array}
&
\begin{array}{c|c@{,\;}c@{\;\vline\;}c@{,\;}c|}
          \multicolumn{1}{c}{}&
	  \multicolumn{2}{c}{b_l}&
	  \multicolumn{2}{c}{b_r}\\
	  \cline{2-5}
 	  a_l & 1 & 0 & 0 & 1 \\
	  \cline{2-5}
	  a_r & 1 & 0 & 0 & 1\\
	  \cline{2-5}
	 \end{array}
&
\begin{array}{c|c@{,\;}c@{\;\vline\;}c@{,\;}c|}
          \multicolumn{1}{c}{}&
	  \multicolumn{2}{c}{b_l}&
	  \multicolumn{2}{c}{b_r}\\
	  \cline{2-5}
 	  a_l & 0 & 1 & 1 & 0 \\
	  \cline{2-5}
	  a_r & 1 & 0 & 0 & 1\\
	  \cline{2-5}
	 \end{array}
\end{array}
\end{displaymath}

The notion of winning strategy is relevant in win-lose games only, but the following remark clarifies why the transfer from winning strategy to multi-outcome Nash equilibrium is a process of generalisation.

\begin{remark}\label{rmk:ws-ne} 
A win-lose game has a winning strategies iff it has a Nash equilibrium.
\end{remark}

\begin{proof}
On the one hand, the strategy profile made of a winning strategy of a player and any strategy of his or her opponent constitutes a Nash equilibrium; conversely, the $X$-component of a Nash equilibrium where player $X$ wins is a winning strategy for $X$.
\end{proof}

Section~\ref{sect:scgc} has already exemplified that both win-lose games and abstract games can be derived from a game structure. This is formalised below.

\begin{definition}[Induced structures, derived games, determined structures, enforcement]\label{defn:is-dg-ds}\hfill

Let $\langle \{1,2\},S_1,S_2,O,v,\{\prec_1,\prec_2\}\rangle$ be a two-player game.
\begin{itemize}
\item $\langle \{1,2\},S_1,S_2,O,v\rangle$ is the structure induced by the game, and conversely, the game is said to be derived from the structure. 
\item Let $wl$ be a function from $O$ to $\{(1,0),(0,1)\}$, the win-lose game $\langle S_1,S_2,wl\circ v\rangle$ is also said to be derived from the structure. 
\item Let $R_1\subseteq S_1$ and $R_2\subseteq S_2$. If all win-lose games derived from a structure are determined (\textit{via} strategies in $R_1$ or $R_2$), the structure is also said to be determined (\textit{via} strategies in $R_1$ or $R_2$).
\item Let $\langle \{1,2\},S_1,S_2,O,v\rangle$ be a game structure, let $P\subseteq O$, and let $s_1\in S_1$ such that $v(s_1,S_2):=\{v(s_1,s_2)\,\mid\,s_2\in S_2\}\subseteq P$. The strategy $s_1$ is said to enforce $P$ and exclude $O\backslash P$.
\end{itemize}
\end{definition}

The subsets $R_i$ from Definition~\ref{defn:is-dg-ds} represent strategies of special interest. For instance, as already mentioned, Muller games are determined through strategies that can be described by finite automata, and parity games are determined through strategies that are called positional. 

The leftmost game structure below is not determined, \textit{e.g.}, because instantiating $X$ with $(1,0)$ and $Y$ with $(0,1)$ yields a game without Nash equilibrium, or equivalently without winning strategy. The other two game structures are determined. (To see this for the rightmost one, it suffices to make a case distinction on how $Y$ is instantiated.)

\begin{displaymath}
\begin{array}{c@{\hspace{1cm}}c@{\hspace{1cm}}c}
\begin{array}{c|c@{\;\vline\;}c|}
          \multicolumn{1}{c}{}&
	  \multicolumn{1}{c}{b_{l}}&
	  \multicolumn{1}{c}{b_{r}}\\
	  \cline{2-3}
 	  a_{l} & X & Y \\
	  \cline{2-3}
	  a_{r} & Y & X\\
	  \cline{2-3}
\end{array}
&
\begin{array}{c|c@{\;\vline\;}c|}
	  \multicolumn{1}{c}{}&
	  \multicolumn{1}{c}{b_{l}}&
	  \multicolumn{1}{c}{b_{r}}\\
       	  \cline{2-3}
 	  a_l & X & Z\\
	  \cline{2-3}
	  a_r & Y & Y\\
	  \cline{2-3}
\end{array}
&
\begin{array}{c|c@{\;\vline\;}c@{\;\vline\;}c|}
	  \multicolumn{1}{c}{}&
	  \multicolumn{1}{c}{b_{l}}&
	  \multicolumn{1}{c}{b_{m}}&
	  \multicolumn{1}{c}{b_{r}}\\
       	  \cline{2-4}
 	  a_l & X & Z &Y \\
	  \cline{2-4}
	  a_r & Y & Y &Y\\
	  \cline{2-4}
\end{array}
\end{array}
\end{displaymath}

The following remark holds since deriving a win-lose game from a structure amounts to choosing the characteristic function of a subset of the outcomes.

\begin{remark} \label{rmk:d-e}
A game structure is determined iff each subset of the outcomes can be either enforced by player $1$ or excluded by player $2$.  
\end{remark}

\subsection{Main result and proof in the simplest case}\label{sect:mr-psc}

Put simply, the equilibrium-transfer theorem reads as follows: if a game structure is determined through nice strategies, all (reasonable) abstract games derived from the structure have a nice Nash equilibrium; and the converse is of course true! (In the sequel, if $\prec$ is a binary relation, its inverse is defined by $x \prec^{-1} y$ iff $y \prec x$.)

\begin{theorem}[Equilibrium transfer]\label{thm:intro-et}
Let $\langle \{1,2\},S_1,S_2,O,v,\{\prec_1,\prec_2\}\rangle$ be a two-player game whose induced structure is determined through strategies in $R_1\subseteq S_1$ and $R_2\subseteq S_2$, and assume either of the following conditions:
\begin{enumerate}
\item the preferences $\prec_1$ (resp. $\prec_2$) has (uniformly) finite height,\\
\textit{i.e.} there is $n\in\mathbb{N}$ such that there is no $o_0\prec_1 o_1\prec_1 \dots\prec_1 o_n$ (resp. $o_0\prec_2 o_1\prec_2 \dots\prec_2 o_n$).
\item the strategy sets $S_1$ and $S_2$ are countable and $\prec_1^{-1}$ and $\prec_2^{-1}$ are well-founded,\\
\textit{i.e.} there is no infinite ascending sequence $o_0\prec_1 o_1\prec_1 \dots$ (resp. $o_0\prec_2 o_1\prec_2 \dots$)
\end{enumerate}
Then $\langle \{1,2\},S_1,S_2,O,v,\{\prec_1,\prec_2\}\rangle$ has a Nash equilibrium in $R_1\times R_2$.
\end{theorem}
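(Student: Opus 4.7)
My plan is to reformulate the Nash equilibrium condition as a pair of exclusion statements and then use the enforce/exclude dichotomy of Remark~\ref{rmk:d-e} as an oracle in a monotone iteration whose termination is guaranteed by the hypotheses.

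For $i\in\{1,2\}$ let $U_i(o):=\{o'\in O\,:\,o\prec_i o'\}$. A profile $(s_1,s_2)$ with outcome $o:=v(s_1,s_2)$ is a Nash equilibrium exactly when $s_1$ excludes $U_2(o)$ and $s_2$ excludes $U_1(o)$, so the task is to find a triple $(o^*,s_1^*,s_2^*)\in O\times R_1\times R_2$ with $v(s_1^*,s_2^*)=o^*$ satisfying these two exclusions. Applied to the set $U_1(o)$, the determinacy hypothesis gives the key alternative: either player~$1$ has a strategy in $R_1$ enforcing $U_1(o)$---so player~$1$ can unilaterally guarantee a strictly $\prec_1$-better outcome than $o$---or player~$2$ has a \emph{defender} $\tau_2(o)\in R_2$ excluding $U_1(o)$; symmetrically for $U_2(o)$ and a defender $\tau_1(o)\in R_1$.

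I would then try to produce a \emph{deadlock} outcome $o^*$ at which both defenders $\tau_1(o^*),\tau_2(o^*)$ exist, by iterating: whenever the current reference outcome $o_k$ falls into the enforcing alternative for some player, use that enforcing strategy to pass to a strictly $\prec_i$-better $o_{k+1}$. Under condition~(1) the uniform finite height caps any strict $\prec_i$-chain, so consecutive improvements for one player must eventually end in that player's defending alternative; a careful interleaving of the two players' improvements then bounds the combined process. Under condition~(2) the strategy sets $S_1,S_2$ are countable, which lets me lay down a transfinite schedule that fairly visits every potential deviation, while well-foundedness of $\prec_i^{-1}$ rules out infinite strict improvement chains for either player.

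I expect the main obstacle to be the closing step: verifying that the pair $(\tau_1(o^*),\tau_2(o^*))$ actually realises $o^*$. A priori $o':=v(\tau_1(o^*),\tau_2(o^*))$ may differ from $o^*$; one only knows $o^*\not\prec_i o'$ for both $i$, and since no transitivity is assumed for $\prec_i$, this does not imply $U_i(o')\subseteq U_i(o^*)$. To close the loop I would fold the closure condition into the iteration, treating $o'$ as a new reference and re-invoking the determinacy oracle. The cleanest formulation is probably to phrase the whole construction as building a monotone chain in a well-founded order on $O\times R_1\times R_2$ derived jointly from $\prec_1$ and $\prec_2$, so that termination of that chain forces the realised outcome and the reference outcome to coincide.
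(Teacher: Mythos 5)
Your opening reformulation is sound: with $U_i(o):=\{o'\in O\,:\,o\prec_i o'\}$, a profile with outcome $o$ is a Nash equilibrium iff $s_2$ excludes $U_1(o)$ and $s_1$ excludes $U_2(o)$, and Remark~\ref{rmk:d-e} does give, for each $o$, the alternative ``player $i$ can force something strictly $\prec_i$-above $o$, or the opponent has a defender''. But the engine you build on top of it does not run. The sequence $o_0,o_1,\dots$ you generate is not a chain for either single preference: each step is a strict improvement for \emph{one} of the two players, and an improvement for player $1$ may be arbitrarily bad for player $2$. Finite height of $\prec_1$ and of $\prec_2$ separately bounds only runs of consecutive improvements by the \emph{same} player; it puts no bound on the number of alternations, and the process can cycle, since $o\prec_1 o'$ together with $o'\prec_2 o$ is the generic situation (it is exactly what happens in every win-lose game). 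For the same reason your proposed repair cannot work as stated: a well-founded order on $O\times R_1\times R_2$ ``derived jointly from $\prec_1$ and $\prec_2$'' in which every step of your iteration is an increase would have to extend $\prec_1\cup\prec_2$, which is in general cyclic. The same objection applies to your closing step, where you re-enter the loop with $o':=v(\tau_1(o^*),\tau_2(o^*))$. Determinacy is what must forbid infinite alternation, and your argument never brings it to bear on that point.

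The paper's proof supplies exactly the missing ingredient. In the finite(-cone) case (Lemma~\ref{lem:et}) the extremal object is not an outcome but an \emph{enforceable set}: one linearly extends $\prec_1$, lifts the extension to the power set (Definition~\ref{defn:lift}), takes the $<^{\mathcal{P}}$-greatest set $M$ that player $1$ can enforce, and then takes $m$ to be $\prec_2$-maximal \emph{inside} $M$. Stability of player $2$ comes from maximality of $m$ in $M$; stability of player $1$ comes from the fact that the set $X$ obtained from $M$ by deleting $m$ and adding everything above $m$ is strictly $<^{\mathcal{P}}$-larger than $M$, hence not enforceable by player $1$, hence excludable by player $2$ --- and that exclusion pins the realised outcome to exactly $m$, which disposes of your ``closing step'' at the same stroke. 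Condition (1) reduces to this case via rank functions; condition (2) is genuinely harder and is handled by a transfinite induction on the order types of the (linearised) inverse preferences, using Lemma~\ref{lem:reflect-eq} to shrink the game whenever a player can exclude a tail of his or her least-preferred outcomes, and the diagonal argument of Lemma~\ref{lem:countable-finite} to contradict determinacy when neither player can. Your ``transfinite schedule that fairly visits every potential deviation'' does not engage with either of these difficulties.
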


Following up on Section~\ref{sect:scgc}: to prove that all finite real-valued two-player games in extensive form have a Nash equilibrium, it suffices, firstly, to show that the structures induced by these games are determined, secondly, to argue that the preferences of each given real-valued game have finite height (since they are acyclic and the game involves only finitely many payoff functions), and finally, to invoke the equilibrium-transfer theorem. Note that theses games are not derived from the same game structure, but rather from infinitely many structures.

The equilibrium-transfer theorem should be of interest to both game theorists and mathematicians dealing with games. To game theorists, it provides an economical way of proving existence of multi-outcome Nash equilibrium for all games in a class that is derived from a set of two-player game structures; to mathematicians, it provides an easy way to generalise their determinacy results, which are dedicated to logic in the first place, and thus advertise their work to game theorists.

Proposition~\ref{prop:mmt} below is a very simple version of the equilibrium-transfer theorem, but the very basic idea is already there. It uses only an abstraction of zero-sum games, \textit{i.e.} two-player games where the two preference relations are inverses of each other. It is named after von Neumann's Minimax Theorem (see, \textit{e.g.}, \cite{NM44}) to hint at the similarities, although it is neither a generalisation nor a special case of it. (For instance, the Minimax Theorem involves infinitely many mixed strategies whereas Proposition~\ref{prop:mmt} involves finitely many pure strategies only.) 

\begin{proposition}[Minimax transfer]\label{prop:mmt}
Let $\langle \{1,2\}, S_1,S_2,O,v,\{<_1,<_1^{-1}\}\rangle$ be a two-player game in normal form whose induced structure is determined, and assume that $<_1$ is a strict linear order over the finite domain $O$. Then the game has a Nash equilibrium (and all Nash equilibria yield the same outcome).
\end{proposition}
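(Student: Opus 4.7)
The plan is to use the finiteness and linearity of $<_1$ to pick out a critical threshold outcome, and then exploit determinacy (via Remark~\ref{rmk:d-e}) with respect to the upper set at that threshold.

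First I would enumerate $O=\{o_1<_1 o_2<_1\dots<_1 o_n\}$ and, for each $k\in\{1,\dots,n\}$, consider the upper set $P_k:=\{o_k,\dots,o_n\}$. Since player $1$ trivially enforces $P_1=O$, the set of indices $k$ such that player $1$ enforces $P_k$ is non-empty and finite; let $k^*$ be its maximum, and let $s_1^*\in S_1$ witness enforcement of $P_{k^*}$. If $k^*<n$ then, by maximality, player $1$ cannot enforce $P_{k^*+1}$, so by Remark~\ref{rmk:d-e} (applied to the subset $P_{k^*+1}$ of the outcomes) player $2$ can exclude $P_{k^*+1}$; that is, player $2$ has a strategy $s_2^*$ with $v(S_1,s_2^*)\subseteq\{o_1,\dots,o_{k^*}\}$. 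If $k^*=n$, player $2$ may choose any strategy as $s_2^*$, and the same upper bound on $v(S_1,s_2^*)$ holds trivially.

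Next I would show that $(s_1^*,s_2^*)$ is a Nash equilibrium. By construction, $v(s_1^*,s_2^*)\geq_1 o_{k^*}$ (from $s_1^*$) and $v(s_1^*,s_2^*)\leq_1 o_{k^*}$ (from $s_2^*$), so $v(s_1^*,s_2^*)=o_{k^*}$. For any deviation $s_1\in S_1$ of player $1$, enforcement by $s_2^*$ gives $v(s_1,s_2^*)\leq_1 o_{k^*}=v(s_1^*,s_2^*)$, so $v(s_1^*,s_2^*)\not<_1 v(s_1,s_2^*)$, i.e.\ player $1$ is stable. Symmetrically, for any deviation $s_2\in S_2$, enforcement by $s_1^*$ gives $v(s_1^*,s_2)\geq_1 o_{k^*}$, so $v(s_1^*,s_2^*)\not<_1^{-1} v(s_1^*,s_2)$, and player $2$ is stable.

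Finally, for uniqueness of the Nash outcome, let $(t_1,t_2)$ be any Nash equilibrium with outcome $o'=v(t_1,t_2)$. Stability of player $1$ means $v(s_1,t_2)\leq_1 o'$ for every $s_1$; applied to $s_1^*$, this gives $o_{k^*}\leq_1 v(s_1^*,t_2)\leq_1 o'$. Stability of player $2$ means $v(t_1,s_2)\geq_1 o'$ for every $s_2$; applied to $s_2^*$, this gives $o'\leq_1 v(t_1,s_2^*)\leq_1 o_{k^*}$. Hence $o'=o_{k^*}$. The only genuine subtlety is recognising that determinacy must be used for the specific upper set just beyond the maximum enforceable one, which is what makes the two enforcement strategies interlock to pin the outcome at $o_{k^*}$; everything else is a direct unwinding of the definitions.
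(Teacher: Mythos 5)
Your proof is correct and is essentially the paper's own argument: your $P_{k^*}$ is exactly the smallest $<_1$-terminal interval that player $1$ can enforce, your $o_{k^*}$ is its minimum $m$, and the use of Remark~\ref{rmk:d-e} on $P_{k^*+1}$ to obtain $s_2^*$ matches the paper's appeal to determinacy to let player $2$ enforce $(O\backslash P)\cup\{m\}$. The explicit handling of the edge case $k^*=n$ and the spelled-out uniqueness argument are welcome details that the paper leaves implicit.
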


\begin{proof}
Let $P$ be the smallest (for inclusion) $<_1$-terminal interval (\textit{i.e.} $x\in P\wedge x<_1 y\Rightarrow y\in P$) that player $1$ can enforce \textit{via} some strategy $s_1$, and let $m$ be the $<_1$-minimum (and the $<_1^{-1}$-maximum!) of $P$. Since player $1$ cannot enforce $P-\{m\}$, by remark~\ref{rmk:d-e} player $2$ can enforce $(O\backslash P)\cup\{m\}$ \textit{via} some strategy $s_2$. Therefore $(s_1,s_2)$ is a Nash equilibrium yielding outcome $m$, and any strategy profile that does not yield $m$ may be improved upon by one player $i$ \textit{via} $s_i$.  
\end{proof}

All the conditions of application of Theorem~\ref{thm:intro-et} are important, as mentioned below and further detailed in Section~\ref{sect:ltt}. First, when the inverse of some preference is not well-founded, it is easy to build a (one-player) game without Nash equilibrium. Second, this article defines a two-player game without Nash equilibrium although its induced game structure is determined, while the strategies of one player only are countably many, one preference only has finite height, and the inverse of the other preference is still well-founded. Third, a natural three-player version of the equilibrium-transfer theorem may sound as follows: "given a three-player game with preferences of finite height, if replacing the actual preferences by preferences of smaller height always yields a game with a Nash equilibrium, and/or if merging two players (into a super-player) or slicing the induced structure (\textit{i.e} fixing one player's strategy) always yields a determined structure, then the given three-player game has a Nash equilibrium." Counter-examples show that simpler versions of the statement above do not hold, but the general case is still open.

\section{The equilibrium-transfer theorem}\label{sect:tt}

This section proves the theorem by transfinite induction on the preferences. The three main ingredients of the proof are: an equilibrium-reflecting reduction that shrinks games in terms of preferences, a property on functions from $\mathbb{N}^2$ to $\mathbb{N}$ that enables a diagonal argument when shrinking games is not possible, and a finite-case version of the theorem, which itself relies on lifting binary relations to the power set of their domains. This lift, defined below, is the key idea of the equilibrium transfer: especially, it overcomes the difficulty that the proof of the minimax transfer does not scale up for preferences that are not inverses of each other. Note that a simpler and sufficient (to prove equilibrium transfer) version of the lift is mentioned in Remark~\ref{rem:finite-linear} afterwards.

\begin{definition}\label{defn:lift}
A binary relation $\prec$ on a set $S$ may be lifted to the power set of $S$ as below.
\[\forall A,B\subseteq S,\quad A\prec^{\mathcal{P}} B\,:=\,\exists a\in A\backslash B,\forall b\in B\backslash A,\,a\prec b\] 
\end{definition}

\begin{lemma}\label{lem:lrs-lp}
Let $\prec$ be a binary relation on a set $S$. If $\prec$ is a strict linear order, $\prec^{\mathcal{P}}$ is a strict partial order. 
\end{lemma}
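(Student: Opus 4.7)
The plan is to verify irreflexivity and transitivity of $\prec^{\mathcal{P}}$ straight from Definition~\ref{defn:lift}. Irreflexivity is essentially free: $A \setminus A = \emptyset$ leaves no possible witness $a$, so $A \prec^{\mathcal{P}} A$ is vacuously false, and no hypothesis on $\prec$ is needed for this. All the content of the lemma therefore sits in transitivity.

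Suppose $A \prec^{\mathcal{P}} B$ is witnessed by some $a \in A \setminus B$ and $B \prec^{\mathcal{P}} C$ by some $b^{*} \in B \setminus C$; I must produce a witness $a' \in A \setminus C$ lying $\prec$-below every element of $C \setminus A$. The useful set-theoretic observation is the inclusion $C \setminus A \subseteq (B \setminus A) \cup (C \setminus B)$, so each $c \in C \setminus A$ falls under one of the two witness hypotheses already available. I then split into four cases according to whether $a \in C$ and whether $b^{*} \in A$. When $a \notin C$ and $b^{*} \notin A$, take $a' := a$: the subcase $c \in C \setminus B$ is handled by $a \prec b^{*} \prec c$, with $a \prec b^{*}$ coming from $b^{*} \in B \setminus A$. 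When $a \in C$ and $b^{*} \in A$, take $a' := b^{*}$ by the symmetric argument, now using $b^{*} \prec a$ (forced by $a \in C \setminus B$) to push $b^{*}$ below elements of $B \setminus A$. When $a \notin C$ and $b^{*} \in A$, both $a$ and $b^{*}$ lie in $A \setminus C$, and linearity of $\prec$ lets me take $a'$ to be the $\prec$-smaller of the two, combining the two preceding verifications.

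The main obstacle is the remaining case, $a \in C$ and $b^{*} \notin A$: no obvious witness is in sight. The key point I would highlight is that this case is actually vacuous, and ruling it out is exactly where strictness and linearity of $\prec$ enter. Indeed, $a \in A \setminus B$ together with $a \in C$ puts $a$ in $C \setminus B$, so the second hypothesis gives $b^{*} \prec a$; but $b^{*} \in B$ together with $b^{*} \notin A$ puts $b^{*}$ in $B \setminus A$, so the first hypothesis gives $a \prec b^{*}$, contradicting strictness. Once this observation is made, the rest is bookkeeping with transitivity of $\prec$; neither well-foundedness nor finiteness is needed, which is consistent with the statement.
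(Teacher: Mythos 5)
Your proof is correct and rests on the same core idea as the paper's: the witness for $A\prec^{\mathcal{P}}C$ is whichever of the two given witnesses $a,b^{*}$ is $\prec$-smaller, with transitivity of $\prec$ closing the gap via the inclusion $C\setminus A\subseteq(B\setminus A)\cup(C\setminus B)$. The only difference is organisational: the paper splits on $a\prec b^{*}$ versus $b^{*}\prec a$ and deduces the membership facts, whereas you split on membership ($a\in C$?, $b^{*}\in A$?) and deduce the order facts, your vacuous fourth case corresponding exactly to the paper's use of irreflexivity to rule out $a\prec b^{*}\prec a$.
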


\begin{proof}
A strict partial order is a transitive and irreflexive binary relation. A strict linear order is a strict partial order such that any two distinct elements are comparable. Assume that $\prec$ is as strict linear order. Since $\prec^{\mathcal{P}}$ is irreflexive by definition, it suffices to show that $\prec^{\mathcal{P}}$ is transitive. Assume that $A\prec^{\mathcal{P}}B$ and $B\prec^{\mathcal{P}}C$ with respective witnesses $a\in A\backslash B$ and $b\in B\backslash C$. First note that $a\neq b$ since $a\notin B$ and $b\in B$. Now let us case-split to show that $A\prec^{\mathcal{P}}C$.
\begin{itemize}
\item Assume that $a\prec b$, so $\neg(b\prec a)$ by transitivity and irreflexivity assumptions, so $a\notin C\backslash B$ since $b$ is a witness for $B\prec^{\mathcal{P}}C$. Together with $a\notin B$ it yields $a\notin C$, so $a\in A\backslash C$. Now let $x$ be in $C\backslash A$. If $x\in B$, then $x\in B\backslash A$, and $a\prec x$ since $a$ is a witness for $A\prec^{\mathcal{P}}B$. If $x\notin B$, then $x\in C\backslash B$, and $b\prec x$ since $b$ is a witness for $B\prec^{\mathcal{P}}C$, so $a\prec x$ by transitivity. Therefore $A\prec^{\mathcal{P}}C$ is witnessed by $a$.
\item Assume that $b\prec a$, so $\neg(a\prec b)$ by transitivity and irreflexivity assumptions, so $b\notin B\backslash A$ since $a$ is a witness for $A\prec^{\mathcal{P}}B$. Together with $b\in B$ it yields $b\in A$, so $b\in A\backslash C$. Now let $x$ be in $C\backslash A$. If $x\notin B$, then $x\in C\backslash B$, and $b\prec x$ since $b$ is a witness for $B\prec^{\mathcal{P}}C$. If $x\in B$, then $x\in B\backslash A$, and $a\prec x$ since $a$ is a witness for $A\prec^{\mathcal{P}}B$, so $b\prec x$ by transitivity. Therefore $A\prec^{\mathcal{P}}C$ is witnessed by $b$.
\end{itemize}
\end{proof}

\begin{remark}\label{rem:finite-linear}
For finite linear order, Definition~\ref{defn:lift} can be rephrased as $A<^{\mathcal{P}} B\,:= A\neq B\,\wedge\, \mathrm{min}_{<}(A\backslash B \cup B\backslash A)\in A$, and Lemma~\ref{lem:lrs-lp} can be strengthen into "$<^{\mathcal{P}}$ is also a strict linear order". This $<^{\mathcal{P}}$ is isomorphic to the lexicographic order induced by $<$ on the characteristic functions of the complements of the subsets. For example let $o_1 < o_2 < o_3 < o_4 < o_5$, then 
$\{o_2,o_3,o_4,o_5\} <^{\mathcal{P}} \{o_2,o_4\}$ corresponds to $10000 <_{lex} 10101$. 
\end{remark}

The lemma below states a bit more than a mere finitely-many-outcome version of the forthcoming theorem; it sounds a bit less natural too, due to Condition~\ref{cond:lem-et3} (which is obviously fulfilled when the outcomes are finitely many), but it is very useful in the proof of the equilibrium-transfer theorem. In the statement of the lemma, $\prec_i^*$ denotes the reflexive and transitive closure of $\prec_i$.

\begin{lemma}[Finitary equilibrium transfer]\label{lem:et}
Let $\langle \{1,2\}, S_1,S_2,O,v,\{\prec_1,\prec_2\}\rangle$ be a two-player game in normal form, let $R_1\subseteq S_1$ and $R_2\subseteq S_2$, and let us assume the following:
\begin{enumerate}
\item\label{cond:lem-et1} the game structure is determined \textit{via} strategies in $R_1$ and $R_2$.
\item\label{cond:lem-et2} both preferences $\prec_1$ and $\prec_2$ are acyclic.
\item\label{cond:lem-et3} $\exists s_1\in S_1,\,|\{o\in O\,\mid\,\exists s_2\in S_2,\,v(s_1,s_2)\,\prec_1^*\, o\}|<\infty\quad\vee$\\
$\exists s_2\in S_2,\,|\{o\in O\,\mid\,\exists s_1\in S_1,\,v(s_1,s_2)\,\prec_2^*\, o\}|<\infty$\\
That is, one player $i$ can enforce a subset of outcomes whose $\prec_i$-upward-generated cone is finite.
\end{enumerate}
Then the game $\langle \{1,2\}, S_1,S_2,O,v,\{\prec_1,\prec_2\}\rangle$ has a Nash equilibrium in $R_1\times R_2$.
\end{lemma}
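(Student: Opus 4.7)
By the symmetry of the hypotheses between players~$1$ and~$2$, I may assume that Condition~\ref{cond:lem-et3} holds for player~$1$: there is an $s_1^0 \in S_1$ for which $Q := \{o \in O : \exists s_2 \in S_2,\ v(s_1^0, s_2) \prec_1^* o\}$ is finite. Since $\prec_1^*$ is reflexive, $v(s_1^0, S_2) \subseteq Q$, and Remark~\ref{rmk:d-e} applied to $Q$ yields either a strategy in $R_1$ enforcing $Q$ or a strategy in $R_2$ enforcing $O\setminus Q$; the second option contradicts the behaviour of $s_1^0$, so the first holds. The family
\[
\mathcal{F} := \{v(s_1, S_2) : s_1 \in R_1,\ v(s_1, S_2) \subseteq Q\}
\]
is therefore a nonempty subfamily of the finite power set $\mathcal{P}(Q)$.

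Acyclicity of $\prec_1$ on the finite set $Q$ lets me fix a strict linear extension $<_1$ of $\prec_1|_Q$; Remark~\ref{rem:finite-linear} then makes $<_1^{\mathcal{P}}$ a strict linear order on $\mathcal{P}(Q)$. I pick a $<_1^{\mathcal{P}}$-maximum element $X^* \in \mathcal{F}$, write $X^* = v(s_1^*, S_2)$ for a witness $s_1^* \in R_1$, and, by acyclicity of $\prec_2$ on the finite set $X^*$, select some $\prec_2$-maximal $m^* \in X^*$.

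The core step is to identify $s_2^*$. Set $B := \{o \in O : m^* \prec_1 o\}$; because $m^* \in X^* \subseteq Q$ furnishes some $s_2$ with $v(s_1^0, s_2) \prec_1^* m^*$, transitivity of $\prec_1^*$ gives $B \subseteq Q$. Consider $P := (X^* \setminus \{m^*\}) \cup (B \setminus X^*) \subseteq Q$. I claim that no $s_1' \in R_1$ enforces $P$: if some $s_1'$ did, then $v(s_1', S_2)$ would lie in $\mathcal{F}$, and one would check via Remark~\ref{rem:finite-linear} that $X^* <_1^{\mathcal{P}} v(s_1', S_2)$. Indeed $m^* \in X^* \setminus v(s_1', S_2)$ (since $m^* \notin P$), while every element of $v(s_1', S_2) \setminus X^* \subseteq B \setminus X^*$ is $<_1$-strictly above $m^*$ (as $\prec_1\subseteq{<_1}$), so the $<_1$-minimum of the symmetric difference of $X^*$ and $v(s_1', S_2)$ lies in $X^* \setminus v(s_1', S_2) \subseteq X^*$, contradicting maximality of $X^*$. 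Remark~\ref{rmk:d-e} then supplies $s_2^* \in R_2$ enforcing $Y^* := O\setminus P = ((O \setminus X^*) \setminus B) \cup \{m^*\}$.

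It remains to verify that $(s_1^*, s_2^*) \in R_1 \times R_2$ is a Nash equilibrium: the intersection $X^* \cap Y^* = \{m^*\}$ pins the outcome to $m^*$; a unilateral deviation by player~$1$ stays in $Y^*$, which avoids $B$, so no outcome strictly $\prec_1$-beats $m^*$; a unilateral deviation by player~$2$ stays in $X^*$, where $m^*$ is already $\prec_2$-maximal. I expect the unenforceability of $P$ to be the main obstacle: it is where the $\prec_2$-based choice of $m^*$ and the $\prec_1$-based lift must be reconciled, the trick being that placing $m^*$ on the $X^*$-side of the symmetric difference traps the $<_1$-minimum in $X^*$ irrespective of how $m^*$ compares to the other elements of $X^*$ under~$<_1$.
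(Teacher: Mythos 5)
Your proposal is correct and follows essentially the same route as the paper's proof: enforce the finite cone, take the $<^{\mathcal{P}}$-maximal enforceable subset, pick a $\prec_2$-maximal outcome $m^*$ in it, exhibit a set that player $1$ cannot enforce (whence player $2$ enforces its complement by determinacy), and check stability of both players at the resulting profile. The only deviation is cosmetic: your unenforceable set $P=(X^*\setminus\{m^*\})\cup(B\setminus X^*)$ uses the $\prec_1$-upset of $m^*$ where the paper uses $\{x\in M\mid x<m\}\cup\{x\in C\mid m<x\}$, and both choices make the same argument go through.
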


\begin{proof}
First note that if player $i$ can enforce a subset of outcome (\textit{via} some strategy in $S_i$), he or she can enforce it \textit{via} some strategy in $R_i$, since the opponent cannot exclude it and by determinacy assumption together with Remark~\ref{rmk:d-e}. Assume that, \textit{e.g.}, player $1$ can enforce a finite $\prec_1$-upward-generated cone $C$. Since $\prec_1$ is acyclic, so is its restriction $\prec_1\mid_C$ to $C$; let $<$ be a strict linear extension of $\prec_1\mid_C$, so $<^{\mathcal{P}}$ is a strict linear order too, by Remark~\ref{rem:finite-linear}. Let $M$ be the $<^{\mathcal{P}}$-greatest subset of $C$ that player $1$ can enforce and let $s_1\in R_1$ be a strategy enforcing $M$. 

Since $M$ is finite and non-empty and since $\prec_2$ is acyclic, let $m$ be $\prec_2\mid_M$-maximal and let $X:=\{x\in M\,\mid\,  x<m\}\cup\{x\in C\,\mid\,m<x\}$. Since $M<^{\mathcal{P}}X$ by Remark~\ref{rem:finite-linear} (or by Definition~\ref{defn:lift}), and since $X\subseteq C$ by definition of $M$ and $X$, player $1$ cannot enforce $X$ by definition of $M$, so player $2$ can enforce $O\backslash X$ by determinacy assumption and Remark~\ref{rmk:d-e}. Let $s_2\in R_2$ be a strategy enforcing $O\backslash X$, so that $v(s_1,s_2)\in M\cap (O\backslash X)=\{m\}$. 

First, the strategy profile $(s_1,s_2)$ makes Player $2$ stable, since $m$ is $\prec_2$-maximal among $M$, which is enforced by $s_1$. Second, let $o\in O$ be such that $m\prec_1 o$, so $o\in C$ by definition of $C$, so $o\in X$ by definition of $<$ and $X$, so $m$ is $\prec_1$-maximal among $O\backslash X$, which is enforced by $s_2$. Therefore $(s_1,s_2)\in R_1\times R_2$ is a Nash equilibrium.
\end{proof}

There is a straightforward algorithmic consequence of the proof of Lemma~\ref{lem:et}. Namely, finding a suitable Nash equilibrium in a two-player game that involves $n$ outcomes requires at most $n$ (resp. 2) calls to the function expecting a win-lose game and returning the winning player (resp. a suitable winning strategy). To justify this, let us fix a game $\langle \{1,2\}, S_1,S_2,\{o_1,\dots,o_n\},v,\{\prec_1,\prec_2\}\rangle$ and let $<$ be a linear extension of $\prec_1$ and assume that $o_1 < \dots < o_n$ up to renaming. Let us represent every subset $O'$ of $\{o_1,\dots,o_n\}$ via a characteristic word $u$ over $\{0,1\}$ (which may be seen as characteristic functions), where $u_i=1$ iff $o_i\in O'$. For all $u \in \{0,1\}^n$ let $w_a(u)$ be the winner of the derived win-lose game $\langle S_1,S_2, u\circ v\rangle$ (see Definition~\ref{def:win-lose}) and $w_s(u) \in R_1 \sqcup R_2$ be a winning strategy for the winner. When called with the arguments $(n,\epsilon)$, the function $g$ defined below returns within $n$ recursive calls the characteristic word of the $<$-maximum subset that player $1$ can enforce, where each recursive step calls $w_a$ only once.

\begin{itemize}
\item $g(0,u) := u$
\item $g(k+1,u):= g(k, u\cdot b)$ where $b := 0$ if $w_a(u\cdot 0\cdot 1^{k}) = 1$ and $b := 1$ otherwise.
\end{itemize}

Now let $o_j$ be $\prec_2$-maximal in the set represented by $g(n,\epsilon)$. The strategy profile $(w_s\circ g(n,\epsilon),w_s(g(n,\epsilon)_{<j}\cdot 0\cdot 1^{n-j})$ is the Nash equilibrium from the proof of Lemma~\ref{lem:et}.

When considering infinitely many outcomes, there may not exist a maximal subset that a given player can enforce. Nonetheless, the lemma above and the following two lemmas will be combined to prove the theorem by transfinite induction on (the order types of the inverses of) the preferences. 

Lemma~\ref{lem:reflect-eq} below relies on the remark that if a player can exclude a lower/downward interval of least-preferred outcomes, no Nash equilibrium will ever yield such outcomes. So, the excludable least-preferred outcomes may just be merged into one single worst outcome of the player, and become the best outcome of the opponent: indeed, this reduction does not create any Nash equilibrium but yields in many cases a smaller game in terms of outcomes and especially of preferences, thus enabling a step in the transfinite induction. Lemma~\ref{lem:reflect-eq} is named after the well-known elimination of dominated strategies (see, \textit{e.g.}, \cite{LR57}), which simplifies a game through its set of strategies only. The two procedures have nothing much in common, but the naming is meant to suggest that they may complement each other nicely (although not in this article).

\begin{lemma}[Elimination of dominated outcomes]\label{lem:reflect-eq}
Let $g=\langle \{1,2\}, S_1,S_2,O,v,\{<_1,<_2\}\rangle$ be a two-player game in normal form with strict linear preferences. Let $e\in S_1$ and $o\in O$ and assume that $o<_1v(e,s_2)$ for all $s_2\in S_2$. Let $g^\prime:=\langle \{1,2\}, S_1,S_2,O^\prime,v^\prime,\{<^\prime_1,<^\prime_2\}\rangle$, where
\begin{itemize}
\item $O^\prime :=\{x\in O\,\mid\, o\leq_1 x\}$
\item $v^\prime(s):=v(s)$ if $v(s)\in O^\prime$ and  $v^\prime(s):=o$ otherwise.
\item $<^\prime_1$ is the restrictions of $<_1$ to $O^\prime$.
\item $x<^\prime_2 y:= x\neq o\,\wedge\, (x<_2 y\,\vee\, y=o)$. 
\end{itemize}

Then every Nash equilibrium of $g^\prime$ is also Nash equilibrium of $g$. 

Moreover, if the inverse relations of $<_1$ and $<_2$ are well-orders, the order types of $(<'_1)^{-1}$ and $(<'_2)^{-1}$ are not greater than those of $(<_1)^{-1}$ and $(<_2)^{-1}$ respectively. Furthermore, if $o$ is not the $<_1$-least element of $O$, the order type of $(<'_1)^{-1}$ is less than that of $(<_1)^{-1}$.
\end{lemma}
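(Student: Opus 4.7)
The plan is to verify the three claims in turn. For the first (preservation of Nash equilibria from $g'$ to $g$), the key preliminary observation is that any Nash equilibrium $s=(s_1,s_2)$ of $g'$ must satisfy $v(s)>_1 o$: indeed, if $v(s)\notin O'$ or $v(s)=o$ then $v'(s)=o$, but deviating to $e$ yields $v(e,s_2)\in O'$ strictly above $o$ in $<'_1$, contradicting equilibrium. Hence $v'(s)=v(s)$ and $v(s)\neq o$. Next I would check that $s$ is Nash in $g$ by case analysis on profitable deviations. A profitable player-$1$ deviation $s'_1$ forces $v(s'_1,s_2)>_1 v(s)>_1 o$, so $v(s'_1,s_2)\in O'$ and the same strict comparison survives in $<'_1$, contradicting equilibrium in $g'$. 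For a profitable player-$2$ deviation $s'_2$, I would split on whether $v(s_1,s'_2)\in O'$: if so, the inequality $v(s)<_2 v(s_1,s'_2)$ transports to $<'_2$ directly, using $v(s)\neq o$; otherwise $v'(s_1,s'_2)=o$ is the $<'_2$-top, and $v(s)\neq o$ again yields $v(s)<'_2 o=v'(s_1,s'_2)$. Either way the equilibrium in $g'$ is violated.

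For the order-type bounds I would treat $<'_1$ and $<'_2$ separately. The set $O'=\{x\in O\mid o\leq_1 x\}$ is an initial segment of $(O,<_1^{-1})$, and $<'_1$ is just the restriction of $<_1$ to $O'$, so $(<'_1)^{-1}$ is an initial segment of $(<_1)^{-1}$; its order type is therefore at most that of $(<_1)^{-1}$, with strict decrease exactly when $O\setminus O'$ is non-empty, \textit{i.e.}\ when $o$ is not the $<_1$-least element of $O$. This covers the $<_1$ half of the second claim together with the third claim. For $<'_2$, I would first restrict $(<_2)^{-1}$ to $O'$ (which cannot increase the order type) and then relocate $o$ to the bottom. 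If $o$ has rank $\beta$ in $(<_2)^{-1}|_{O'}$ so that the restricted order type equals $\beta+1+\delta$, the resulting order type of $(<'_2)^{-1}$ is $1+\beta+\delta$, and the ordinal inequality $1+\beta+\delta\leq\beta+1+\delta$ follows by cases on $\beta$ (equality for finite $\beta$ via $1+\beta=\beta+1$, $\leq$ for infinite $\beta$ via $1+\beta=\beta$).

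The main obstacle, I expect, is getting the player-$2$ case analysis right: the whole point of twisting $<_2$ into $<'_2$ by pushing $o$ to the top is precisely to ensure that deviations in $g$ escaping $O'$ still appear profitable to player $2$ in $g'$, so that equilibria of $g'$ lift back to $g$ without being lost in this case. The remainder is bookkeeping, and the ordinal arithmetic for $<'_2$ reduces to the familiar fact that moving a single element to the bottom of a well-order never increases its order type.
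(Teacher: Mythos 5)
Your proof is correct and follows essentially the same route as the paper's: both first use the deviation to $e$ to show that any Nash equilibrium $s$ of $g'$ satisfies $v'(s)=v(s)\neq o$ (hence $o<_1 v(s)$), and then rule out profitable deviations for each player, your explicit case split on whether the player-$2$ deviation lands in $O'$ being just an unfolding of the paper's contrapositive reading of the definition of $<'_2$. The paper's proof does not address the order-type claims at all, so your initial-segment argument for $(<'_1)^{-1}$ and the computation $1+\beta+\delta\leq\beta+1+\delta$ for $(<'_2)^{-1}$ are a correct and welcome supplement rather than a divergence.
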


\begin{proof}
Let $s$ be a Nash equilibrium of $g^\prime$. Since $o<_1 v(e,s_2)$ by assumption about $e$, the outcome $v(e,s_2)$ is in $O^\prime-\{o\}$ by definition of $O^\prime$, so $o<^\prime_1v^\prime(e,s_2)$ by definitions of $v^\prime$ and $<^\prime_1$. Since $v^\prime(e,s_2)\leq^\prime_1 v^\prime(s)$ by definition of NE and since $<^\prime_1$ is also strict linear, we have $o<^\prime_1v^\prime(s)$, so $v^\prime(s)\in O^\prime-\{o\}$ and $v(s)=v^\prime(s)$ by definitions of $O^\prime$ and $v^\prime$.

Now let us prove by contradiction that both players are stable w.r.t. $s$ and $g$. If $v(s)<_1v(x,s_2)$ for some $x\in S_1$, then $v(x,s_2)\in O^\prime$ since $v(s)\in O^\prime$ and by definition of $O^\prime$, so $v^\prime(s)<^\prime_1v^\prime(x,s_2)$ by definitions of $v^\prime$ and $<^\prime_1$, which contradicts $s$ being an NE of $g^\prime$. If $v(s)<_2v(s_1,y)$ for some $y\in S_2$, then $v^\prime(s)=o$ since $v^\prime(s_1,y)\leq^\prime_2 v^\prime(s)$ (by definition of NE) and by definition of $<^\prime_2$, which is also a contradiction.
\end{proof}

The transfinite-inductive step of the proof of Theorem~\ref{thm:et} will be justified by Lemma~\ref{lem:reflect-eq} above; but in cases where no player is able to exclude a non-trivial downward interval of outcomes, existence of sets $A$ and $B$ will be proved to feed Lemma~\ref{lem:countable-finite} below, which will subsequently yield the existence of a set $C$ that contradicts the the determinacy assumption.

\begin{lemma}\label{lem:countable-finite}
Let $f:\mathbb{N}^2\to\mathbb{N}$. The following two propositions are equivalent.
\begin{enumerate}
\item There exists a subset of the naturals that intersects each $f(n,\mathbb{N})$ and whose complement intersects each $f(\mathbb{N},n)$, where $f(n,\mathbb{N}):=\{f(n,k)\,\mid\,k\in\mathbb{N}\}$.
\item There exist $A$ and $B$ disjoint subsets of the naturals such that either $f(n,\mathbb{N})$ and $A$ overlap or $f(n,\mathbb{N})\backslash(A\cup B)$ is infinite, and likewise, either $f(\mathbb{N},n)$ and $B$ overlap or $f(\mathbb{N},n)\backslash(A\cup B)$ is infinite.
\end{enumerate}

The statement is formalised below.

\[\begin{array}{c}
\forall f:\mathbb{N}^2\to\mathbb{N},\\
\exists C\subseteq\mathbb{N},\forall n\in\mathbb{N},\quad f(n,\mathbb{N})\cap C\neq\emptyset\quad\wedge\quad f(\mathbb{N},n)\cap\mathbb{N}\backslash C\neq\emptyset\\
\Updownarrow\\
\exists A,B\subseteq\mathbb{N}, A\cap B=\emptyset\quad\wedge\quad\forall n\in\mathbb{N},\quad (f(n,\mathbb{N})\cap A\neq\emptyset\quad\vee\quad|f(n,\mathbb{N})\backslash(A\cup B)|=\aleph_0)\quad\wedge\\
\phantom{\exists A,B\subseteq\mathbb{N}, A\cap B=\emptyset\quad\wedge\quad\forall n\in\mathbb{N},\quad} (f(\mathbb{N},n)\cap B\neq\emptyset\quad\vee\quad|f(\mathbb{N},n)\backslash(A\cup B)|=\aleph_0)\phantom{\quad\wedge}
\end{array}\]
\end{lemma}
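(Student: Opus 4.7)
The plan is to prove the two directions separately, with the downward (forward) direction being essentially a relabeling and the upward (backward) direction requiring a countable diagonal construction over the ``free'' elements.

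For $(1) \Rightarrow (2)$ I would simply set $A := C$ and $B := \mathbb{N} \setminus C$. These are disjoint, and since $A \cup B = \mathbb{N}$, the two ``infiniteness'' disjuncts in statement $(2)$ become irrelevant; the nonempty-intersection disjuncts are exactly the hypotheses in $(1)$.

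For the harder direction $(2) \Rightarrow (1)$, let $U := \mathbb{N} \setminus (A \cup B)$, let $R := \{n : f(n,\mathbb{N}) \cap A = \emptyset\}$ and $K := \{n : f(\mathbb{N},n) \cap B = \emptyset\}$. For any $n \notin R$ the row constraint $f(n,\mathbb{N}) \cap C \neq \emptyset$ will be handled automatically by ensuring $A \subseteq C$, and likewise for $n \notin K$ with $B \cap C = \emptyset$. Thus I need only build $D \subseteq U$ so that $C := A \cup D$ satisfies the row constraints for every $n \in R$ via $D$, and the column constraints for every $n \in K$ via some element of $U \setminus D$. The hypothesis (2) guarantees that for $n \in R$ the set $f(n,\mathbb{N}) \cap U$ is infinite, and similarly for $n \in K$ the set $f(\mathbb{N},n) \cap U$ is infinite, which is exactly the resource the diagonal will consume.

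Now I would enumerate $R \cup K$ (interleaving a row task and a column task at each stage) and construct, by induction on the stage $s$, disjoint finite sets $D_s, E_s \subseteq U$: at a row stage with index $n \in R$, pick any element of $f(n,\mathbb{N}) \cap U$ not yet in $D_s \cup E_s$ and put it into $D_{s+1}$; at a column stage with index $n \in K$, pick any element of $f(\mathbb{N},n) \cap U$ not yet in $D_s \cup E_s$ and put it into $E_{s+1}$. Because the relevant set is infinite while $D_s \cup E_s$ is finite, a fresh element always exists, so the recursion never aborts; disjointness of $D := \bigcup_s D_s$ and $E := \bigcup_s E_s$ is preserved at every step. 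Finally set $C := A \cup D$. Then $C \cap B = \emptyset$ (since $D \subseteq U$ and $A \cap B = \emptyset$), and $E \subseteq U \setminus D \subseteq \mathbb{N} \setminus C$, so both families of constraints in $(1)$ are met.

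The only real obstacle is this ordinary diagonal bookkeeping: one has to be careful that each ``task'' is eventually processed and that the freshness requirement never conflicts with the infiniteness resource. The countability of $R \cup K$ (as a subset of $\mathbb{N}$) and the infiniteness of $f(n,\mathbb{N}) \cap U$ and $f(\mathbb{N},n) \cap U$ for the relevant indices make this straightforward; no choice principle beyond dependent choice is needed.
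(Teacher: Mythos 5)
Your proposal is correct and follows essentially the same route as the paper: the forward direction via $A:=C$, $B:=\mathbb{N}\setminus C$, and the backward direction via a stage-by-stage construction that, whenever a row (resp.\ column) misses $A$ (resp.\ $B$), draws a fresh element from the infinite set $f(n,\mathbb{N})\setminus(A\cup B)$ (resp.\ $f(\mathbb{N},n)\setminus(A\cup B)$), finiteness of what has been committed so far guaranteeing availability. The only difference is bookkeeping: the paper accumulates growing sets $X_n\supseteq A$ and $Y_n\supseteq B$ and takes $C$ to be their union, whereas you keep the newly chosen elements in separate sets $D$, $E$ and set $C:=A\cup D$ at the end; these are the same construction.
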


\begin{proof}
For the top-bottom implication $A:=C$ and $B:=\mathbb{N}\backslash C$ witness the claim. To prove the bottom-top implication let us define two sequences of subsets of $\mathbb{N}$ as follows, by mutual induction.
\begin{eqnarray*}
X_0  &:=& A\\
Y_0  &:=& B\\
X_{n+1} &:=& X_n\cup\{min (f(n,\mathbb{N})\backslash(X_n\cup Y_n))\}\mbox{ if } f(n,\mathbb{N})\cap A=\emptyset\mbox{, otherwise } X_{n+1} := X_n \\
Y_{n+1} &:=& Y_n\cup\{min (f(\mathbb{N},n)\backslash(X_{n+1}\cup Y_n))\}\mbox{ if } f(\mathbb{N},n)\cap B=\emptyset\mbox{, otherwise } Y_{n+1} := Y_n \\
\end{eqnarray*}
The inductive steps above are well-defined by the assumed disjunctions and since the $X_n\backslash A$ and $Y_n\backslash B$ are finite by construction. It is provable by induction on $n$ that $X_n$ and $Y_n$ are disjoint for all $n$, and so are $X:=\bigcup_{n=0}^{\infty}X_n$ and $Y:=\bigcup_{n=0}^{\infty}Y_n$. Now note that that $C:=X$ witnesses the claim since $X_{n+1}$ (resp. $Y_{n+1}$) intersects $f(n,\mathbb{N})$ (resp. $f(\mathbb{N},n)$) by construction. 
\end{proof}

The proof of Theorem~\ref{thm:et} starts with a case distinction on Condition~\ref{cond:thm-et2}. The first case is exactly Lemma~\ref{lem:et}; the second case is reduced to Lemma~\ref{lem:et}; and the third case is proved by transfinite induction on the preferences: the base step invokes Lemma~\ref{lem:et} again and the inductive step performs a case distinction, invoking Lemma~\ref{lem:reflect-eq} when possible and proving by Lemma~\ref{lem:countable-finite} that impossibility would contradict Condition~\ref{cond:thm-et1}.  (Note that Theorem~\ref{thm:et} proves slightly more than what was promised by Theorem~\ref{thm:intro-et}.)

\begin{theorem}[Equilibrium transfer]\label{thm:et}
Let  $\langle \{1,2\}, S_1,S_2,O,v,\{\prec_1,\prec_2\}\rangle$ be a two-player game in normal form, let $R_1\subseteq S_1$ and $R_2\subseteq S_2$, and let us assume the following:
\begin{enumerate}
\item\label{cond:thm-et1} the induced game structure is determined \textit{via} strategies in $R_1$ and $R_2$.
\item\label{cond:thm-et2} one of the following assertions holds:
\begin{itemize}
\item the preferences are acyclic and one player $i$ can enforce a finite $\prec_i$-upward cone.
\item the preferences have (uniformly) finite height.
\item $S_1$ and $S_2$ are countably many and the inverses of the preferences are well-founded.   
\end{itemize}
\end{enumerate}
Then the game $\langle \{1,2\}, S_1,S_2,O,v,\{\prec_1,\prec_2\}\rangle$ has a Nash equilibrium in $R_1\times R_2$.
\end{theorem}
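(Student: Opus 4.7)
The plan is a case analysis on Condition~\ref{cond:thm-et2}. The first bullet coincides verbatim with Lemma~\ref{lem:et}, so it is immediate. For the second bullet (uniformly finite height), the strategy is to reduce to Lemma~\ref{lem:et} by exploiting the fact that finite-height preferences are both well-founded and inversely well-founded. I would proceed by induction on the sum of the two heights: after extending both preferences to strict linear orders, look for a player $i \in \{1,2\}$ (with opponent $j$), strategy $e \in S_i$ and outcome $o \in O$ with $o \prec_i v(e, s_j)$ for every $s_j \in S_j$; if found, Lemma~\ref{lem:reflect-eq} produces a strictly smaller game whose Nash equilibria lift to Nash equilibria of the original, so the induction hypothesis applies. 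Otherwise, the uniform absence of such $e, o$ combined with finite height forces one player to enforce a set with a finite $\prec_i$-upward cone, so Condition~\ref{cond:lem-et3} is met and Lemma~\ref{lem:et} applies directly.

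For the third bullet (countable strategies, well-founded inverse preferences), the plan is transfinite induction on the order type of, say, $\prec_1^{-1}$ (treating $\prec_2^{-1}$ symmetrically), after first extending both preferences to strict linear orders. The base step reduces to Lemma~\ref{lem:et}. In the inductive step I would first try Lemma~\ref{lem:reflect-eq}: a strategy $e$ and non-least outcome $o$ witnessing domination for either player yield a reduced game with strictly smaller inverse order type (by the ``moreover'' clause of Lemma~\ref{lem:reflect-eq}), and the induction hypothesis gives a Nash equilibrium that transfers back to the original via the main conclusion of Lemma~\ref{lem:reflect-eq}.

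If no such $o, e$ exist for either player, I would derive a contradiction with Condition~\ref{cond:thm-et1} via Lemma~\ref{lem:countable-finite}. This is where countability is used: enumerating $S_1 = \{s_1^n\}_{n \in \mathbb{N}}$, $S_2 = \{s_2^n\}_{n \in \mathbb{N}}$, and the relevant outcomes, I would encode the ``uneliminated'' outcome data as a function $f \colon \mathbb{N}^2 \to \mathbb{N}$, with $f(n, \mathbb{N})$ (resp.\ $f(\mathbb{N}, n)$) recording outcomes produced by $s_1^n$ against player~$2$ (resp.\ by $s_2^n$ against player~$1$). Taking $A$ and $B$ to collect outcomes already known to be unenforceable by player~$1$ or unexcludable by player~$2$, the simultaneous failure of Lemma~\ref{lem:reflect-eq} translates to clause~(2) of Lemma~\ref{lem:countable-finite}, whose conclusion yields a subset $C$ of outcomes that player~$1$ cannot enforce and whose complement player~$2$ cannot exclude, contradicting Remark~\ref{rmk:d-e} and hence Condition~\ref{cond:thm-et1}.

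The main obstacle is the inductive step of case~3: pinning down the exact encoding $f$ and the subsets $A, B$ so that the simultaneous failure of Lemma~\ref{lem:reflect-eq} for \emph{both} players translates precisely to clause~(2) of Lemma~\ref{lem:countable-finite}. A further subtlety is that linearising the preferences (needed to apply Lemma~\ref{lem:reflect-eq}) must be done in a way that preserves well-foundedness of the inverse, so that the transfinite induction remains legitimate throughout the recursion.
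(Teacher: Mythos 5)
Your case 1 is exactly the paper's, and your case 3 follows the paper's route closely: linearise so that the inverses become well-orders, run a transfinite induction (the paper does it on the \emph{pair} of order types via a well-founded relation on pairs of ordinals, which is what ``treating $\prec_2^{-1}$ symmetrically'' must mean for the recursion to be legitimate, since eliminating outcomes for one player need not shrink the other's order type), apply Lemma~\ref{lem:reflect-eq} when some player can exclude a non-least outcome, and otherwise contradict determinacy via Lemma~\ref{lem:countable-finite}. The encoding you leave open is resolved in the paper by taking $f:=v$ (after identifying $O$, $S_1$, $S_2$ with $\mathbb{N}$) and letting $A$ (resp.\ $B$) be the at-most-two $\prec_1$-least (resp.\ $\prec_2$-least) outcomes: since player $1$ cannot exclude any non-trivial downward interval, every finite set he can enforce meets $A$, which is exactly clause (2) of Lemma~\ref{lem:countable-finite}.

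The genuine gap is your case 2. First, the induction measure is broken: to invoke Lemma~\ref{lem:reflect-eq} you must linearise, and a linear extension of a finite-height preference on an infinite $O$ has infinite height; the reduction only deletes a $<_1$-downward interval, which need not lower the height of the original $\prec_i$ restricted to $O'$ (it may remove one $\prec_i$-minimal element while infinitely many others remain), so ``the sum of the two heights'' does not decrease and the recursion need not terminate. Second, the ``otherwise'' branch is a non sequitur: nothing forces a finite enforceable upward cone. Consider $O=\mathbb{N}$, $S_1$ the cofinite subsets, $S_2$ the infinite subsets, $v(X,Y)=\min(X\cap Y)$, with, say, $2n+1\prec_1 2n$ and any $\prec_2$ of height $2$: the structure is determined, yet every set either player can enforce is infinite, hence so is its $\prec_i$-upward cone, so Condition~\ref{cond:lem-et3} is never met this way; meanwhile the elimination branch only ever strips finite downward intervals from an infinite linearly ordered outcome set. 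The paper sidesteps all of this with a move you are missing: choose rank functions $\rho_i:O\to\{0,\dots,n-1\}$ with $x\prec_i y\Rightarrow\rho_i(x)<\rho_i(y)$ and pass to the image game on the \emph{finite} outcome set $\{0,\dots,n-1\}^2$ with the coordinatewise preferences; Lemma~\ref{lem:et} applies outright there (finiteness makes its third condition trivial), and any Nash equilibrium of the image game is one of the original game by the defining property of $\rho_i$. You should replace your case-2 argument by this two-line reduction.
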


\begin{proof}
The first case is proved by Lemma~\ref{lem:et}. For the second case, let $n\in\mathbb{N}$ bound the height of both $\prec_1$ and $\prec_2$, and let $\rho_1:O\to\{0,\dots,n-1\}$ and $\rho_2:O\to\{0,\dots,n-1\}$ be corresponding rank functions, that is, $x\prec_i y$ implies $\rho_i(x)<\rho_i(y)$. Consider the game $\langle \{1,2\}, S_1,S_2,\{0,\dots,n-1\}^2,(\rho_1\circ v,\rho_2\circ v),\{\prec'_1,\prec'_2\}\rangle$ where $(i,j)\prec'_1(k,l)$ iff $i<k$ and $(i,j)\prec'_2(k,l)$ iff $j<l$. By Lemma~\ref{lem:et} the derived game has a Nash equilibrium in $R_1\times R_2$, which happens to be a Nash equilibrium for the original game too, by property of $\rho_i$.

As for the third case, it suffices to prove the statement for well-ordered preferences, by linear extension and preservation of Nash equilibrium by (set-theoretic) inclusion. Without loss of generality, let us also assume that $S_1$ and $S_2$ are infinite, otherwise let us duplicate strategies, and that $O$ is countable, otherwise let us replace it with $v(S_1,S_2)$. Note that if player $i$ can enforce a subset of outcome (\textit{via} some strategy in $S_i$), he or she can enforce it \textit{via} some strategy in $R_i$, since the opponent cannot exclude it and by determinacy assumption together with Remark~\ref{rmk:d-e}. Let us define a well-founded binary relation over pairs of ordinals, as follows: $(\alpha,\beta)\prec(\gamma,\delta):=(\alpha<\gamma\,\wedge\,\beta\leq\delta)\,\vee\,(\alpha\leq\gamma\,\wedge\,\beta<\delta)$, where $<$ is the usual well-order over ordinals, and let us proceed with the proof by induction (w.r.t $\prec$) on the pairs of order types that correspond to the inverses of the preferences. 

If one order type is finite, it suffices to invoke Lemma~\ref{lem:et}, so let us deal with the case where both order types are infinite. If one player can exclude two or more of his/her least-preferred outcomes, Lemma~\ref{lem:reflect-eq} and the induction hypothesis prove the claim, so let us deal with the case where no player can exclude two or more of his/her least-preferred outcomes. Let $A$ be the set containing (when they exist) the $\prec_1$-least and second-$\prec_1$-least outcomes, so that $A$ is empty if the order-type for player $a$ is a limit ordinal and $A$ is a singleton when the order type is a limit ordinal plus one. Since player $a$ cannot exclude any non-trivial downward interval, every finite set of outcomes that he/she can enforce must intersect $A$. Let us define $B$ likewise and invoke the bottom-to-top implication of Lemma~\ref{lem:countable-finite} instantiated with, up to bijection, $A$ and $B$ defined above and $f:=v$. It implies the existence of a subset of the outcomes ($C$ in Lemma~\ref{lem:countable-finite}) that player $b$ cannot enforce and that player $a$ cannot exclude, which contradicts the determinacy assumption.
\end{proof}

\section{Applications of the equilibrium-transfer theorem}\label{sect:app-et}

Section~\ref{sect:gmt} generalises Martin's Theorem on Borel determinacy, from descriptive set theory, and Section~\ref{sect:pg-mg} generalises two determinacy results from theoretical informatics, namely positional determinacy of parity games and finite-memory determinacy of Muller games. Note that in each case the corresponding class of win-lose games is derived from infinitely many game structures (rather than from a single game structure), as similarly mentioned in Section~\ref{sect:mr-psc}.

\subsection{Generalisation of Borel Determinacy}\label{sect:gmt}

An infinite two-player alternate game consists of two players that play alternately and infinitely many times. In addition, the same non-empty set of choices $C$ is available at each stage, so the first player picks an element in $C$, then the second player picks an element in $C$, then the first player picks an element in $C$ again, and so on. The underlying structure of such a game is a leafless and uniform rooted tree. Moreover, each infinite sequence of choices is mapped to some outcome and both players have preferences over the outcomes.

\begin{definition}[Infinite two-player alternate games and strategies]
An infinite 2-player alternate game is an object $\langle C,O,v,\{\prec_1,\prec_2\}\rangle$ complying with the following:
\begin{itemize}
\item $C$ is a non-empty set (of choices).
\item $O$ is a non-empty set (of possible outcomes of the game). 
\item $v:C^\omega\to O$ (uses outcomes to value the infinite sequences of choice).
\item $\prec_1$ and $\prec_2$ are binary relations over $O$ (called the preferences of player $1$ and $2$, respectively).
\end{itemize}
A function of type $C^{2*}\to C$ (resp. $C^{2*+1}\to C$) is a strategy for player $1$ (resp. $2$). 
\end{definition}

A strategy of a player tells what he/she would play at each node of the game. Since the tree has a uniform structure, it is convenient to represent a strategy of the first (resp. second) player by a function of type $C^{2*}\to C$ (resp. $C^{2*+1}\to C$), where $C^{2*}$ represents the finite sequences on $C$ of even (resp. odd) length. When both players have chosen their individual strategies, their combination induces a unique play, \textit{i.e.} a unique infinite sequence of choices. 

\begin{definition}[Induced play]
Given a game $g=\langle C,W\rangle$, and $s_1:C^{2*}\to C$, and $s_2:C^{2*+1}\to C$, let us define $p(s_1,s_2)$ through its prefixes, inductively as below, where $p_{<n}$ is the prefix of $p$ of length $n$ and the symbol $\cdot$ represents concatenation.  
\begin{itemize}
\item $p(s_1,s_2)_{< 2n+1}:=p(s_1,s_2)_{< 2n}\cdot s_1(p(s_1,s_2)_{< 2n})$
\item $p(s_1,s_2)_{< 2n+2}:=p(s_1,s_2)_{< 2n+1}\cdot s_2(p(s_1,s_2)_{< 2n+1})$
\end{itemize}
\end{definition}

An infinite two-player alternate game $\langle C,O,v,\{\prec_1,\prec_2\}\rangle$ may be translated into a game in normal form $\langle\{1,2\}, (C^{2*}\to C)\times (C^{2*+1}\to C),O,v\circ p,\{\prec_1,\prec_2\}\rangle$, which provides the infinite two-player alternate game framework with a natural notion of Nash equilibrium.

Before stating Borel determinacy below, let us recall that a subset of a topological space $X$ is called Borel if it belongs to the smallest collection of subsets of $X$ which contains all the open sets and is closed under complementation and countable union.

\begin{theorem}[Martin~\cite{Martin75},~\cite{Martin85}]
Let $C$ be a non-empty set and $v:C^\omega\to\{(1,0),(0,1)\}$ be such that $v^{-1}\{(1,0)\}$ is a Borel set of $C^\omega$ (which is endowed with for the product topology of the discrete topology on $C$). Then the win-lose game $\langle (C^{2*}\to C)\times (C^{2*+1}\to C),v\circ p\rangle$ is determined. 
\end{theorem}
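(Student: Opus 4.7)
The plan is a proof by transfinite induction on the Borel hierarchy, with the classical Gale--Stewart argument as the base case.

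First I would handle the closed case: if $v^{-1}\{(1,0)\}$ is closed in the product topology on $C^\omega$, then whenever player~$1$ has no winning strategy from the initial position, player~$2$ wins. The argument is the standard non-losing-quasi-strategy one: from any partial play from which player~$1$ has no winning strategy, player~$2$ can pick a move such that, for every response of player~$1$, the resulting position still denies player~$1$ a winning strategy. Iterating, player~$2$ constructs an infinite play whose every finite prefix extends to a position from which player~$1$ cannot win; since $v^{-1}\{(1,0)\}$ is closed, the limit play lies in its complement, so player~$2$ wins. The open case follows symmetrically.

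For the inductive step I would invoke Martin's unraveling/covering technique. Given a winning set $W = v^{-1}\{(1,0)\}$ of Borel rank $\alpha > 0$, the goal is to build an auxiliary game on a larger choice set $\tilde{C}$ with a continuous projection $\pi : \tilde{C}^{\omega} \to C^{\omega}$ such that $\pi^{-1}(W)$ has strictly smaller Borel rank, and such that any strategy winning in the covering game projects, via $\pi$, to a strategy winning in the original game. The elementary step of the unraveling writes the relevant player's target as a countable union $W = \bigcup_n B_n$ and requires that player to annotate each of his moves with an index $n$ committing him to winning via $B_n$; this lowers the rank by one. One then iterates this construction through all countable ordinals, taking inverse limits of covering trees at limit stages, until the winning condition becomes clopen and the base case of Gale--Stewart applies.

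The main obstacle will be the limit-stage construction: one must check that the inverse limit of the covering trees remains a well-defined game tree on some choice set, that winning strategies at cofinal stages cohere into a winning strategy in the limit, and that the resulting strategy projects back continuously to the original game while preserving victory. Because this bookkeeping is long and delicate, and a self-contained treatment would dwarf the present article, I would refer the reader to Martin's original papers~\cite{Martin75,Martin85} for the technical details and treat the theorem as a black box in the applications of the equilibrium-transfer theorem that follow in this section.
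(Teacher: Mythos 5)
The paper does not prove this statement at all: it is Martin's Borel determinacy theorem, imported verbatim as a cited external result and used as a black box in Corollary~\ref{cor:gmt}. So there is no ``paper's own proof'' to compare against; your decision to defer to \cite{Martin75} and \cite{Martin85} is exactly the paper's own treatment, and for the purposes of this article that is the right call --- a self-contained proof of Borel determinacy would indeed dwarf the paper and is well outside its scope.

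Judged as a proof in its own right, however, your outline has the gap you yourself flag: everything except the Gale--Stewart base case is deferred. The closed/open case is correctly argued (the non-losing quasi-strategy construction is fine), but the inductive step is only named, not carried out, and the one concrete detail you do give is not quite how Martin's argument runs. Writing $W=\bigcup_n B_n$ and having the relevant player ``annotate each move with an index $n$ committing him to winning via $B_n$'' does not by itself lower the Borel rank --- a player need not win via any single $B_n$ fixed in advance, and this kind of commitment device only works at the very bottom of the hierarchy. The actual engine is the unraveling lemma for \emph{closed} sets, in which the auxiliary moves are pieces of covering data (essentially quasi-strategies in subgames), together with the facts that coverings compose and admit inverse limits; the countable union is then handled by unraveling all the $B_n$ simultaneously so that $W$ becomes open, then clopen, in the covering game. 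None of that is reproduced here, so the proposal should be read as a correct road map plus a citation, not as a proof.
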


The generalisation of Martin's theorem below is a straightforward corollary of both Martin's Theorem itself and the equilibrium transfer theorem.  

\begin{corollary}\label{cor:gmt}
Let $\langle C,O,v,\{\prec_1,\prec_2\}\rangle$ be an infinite 2-player alternate game and assume the following three conditions.
\begin{itemize}
\item $O$ is countable.
\item $\prec_1$ and $\prec_2$ have (uniformly) finite height.
\item For all $o\in O$, the pre-image $v^{-1}\{o\}$ is Borel.
\end{itemize}
Then the game $\langle C,O,v,\{\prec_1,\prec_2\}\rangle$ has a Nash equilibrium.
\end{corollary}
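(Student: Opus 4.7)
The plan is to realize the infinite alternate game as a game in normal form, and then check that the hypotheses of Theorem~\ref{thm:intro-et} (condition 1, finite height) are met so that the equilibrium-transfer theorem delivers the conclusion. Concretely, I translate $\langle C,O,v,\{\prec_1,\prec_2\}\rangle$ into $\langle\{1,2\},(C^{2*}\to C),(C^{2*+1}\to C),O,v\circ p,\{\prec_1,\prec_2\}\rangle$, so that the preferences $\prec_1,\prec_2$ transfer unchanged, and in particular they keep their uniformly finite height. Nash equilibria of the normal-form translation are exactly the Nash equilibria of the original alternate game, so it suffices to produce one for the former.

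The main step is verifying that the induced game structure is determined. Given any $wl:O\to\{(1,0),(0,1)\}$, set $P:=wl^{-1}\{(1,0)\}\subseteq O$. Then the win-lose game derived from the structure is $\langle (C^{2*}\to C)\times(C^{2*+1}\to C),\,wl\circ v\circ p\rangle$, and Martin's theorem applies provided $(wl\circ v)^{-1}\{(1,0)\}=v^{-1}(P)$ is a Borel subset of $C^{\omega}$. Here is where the countability hypothesis on $O$ pulls its weight: since $v^{-1}(P)=\bigcup_{o\in P} v^{-1}\{o\}$ is a countable union of Borel sets (by the third assumption), it is Borel. Hence every derived win-lose game is determined, which is precisely the meaning of the induced structure being determined (take $R_1:=S_1$ and $R_2:=S_2$).

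With determinacy of the structure established and the preferences having uniformly finite height, Theorem~\ref{thm:intro-et}(1) (equivalently, the second bullet of Theorem~\ref{thm:et}) applies and yields a Nash equilibrium of the normal-form game, hence of the alternate game.

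The only subtle point is really the countability-plus-Borel-closure argument; everything else is a straightforward unpacking of definitions. Note that condition 2 of Theorem~\ref{thm:intro-et} is not available here since the strategy sets $C^{2*}\to C$ and $C^{2*+1}\to C$ are typically uncountable, so the finite-height hypothesis on the preferences is what we must rely on, and this is exactly what the statement provides.
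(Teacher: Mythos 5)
Your proposal is correct and follows essentially the same route as the paper: embed the alternate game into normal form, observe that for any $wl:O\to\{(1,0),(0,1)\}$ the set $(wl\circ v)^{-1}\{(1,0)\}$ is a countable union of the Borel sets $v^{-1}\{o\}$ and hence Borel, invoke Martin's theorem for determinacy of the induced structure, and conclude via the finite-height case of the equilibrium-transfer theorem. Your write-up is in fact slightly more explicit than the paper's about why condition 2 of Theorem~\ref{thm:intro-et} is unavailable and about the equivalence of Nash equilibria under the embedding, but the argument is the same.
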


\begin{proof}
Thanks to the above-mentioned embedding (of alternate games into games in normal form) and the (uniformly) finite height assumption, it suffices to check Condition~\ref{cond:thm-et1} from Theorem~\ref{thm:et}. This is done along Definition~\ref{defn:is-dg-ds}, so let $wl:O\to\{(1,0),(0,1)\}$. The set $(wl\circ v)^{-1}\{(1,0)\}$ is Borel since it equals $\cup\{v^{-1}\{o\}\,\mid\,v(o)=(1,0)\}$, a countable union of set that are Borel by assumtion. So, by Borel determinacy, the win-lose game $\langle (C^{2*}\to C)\times (C^{2*+1}\to C),w\circ v\circ p\rangle$ is determined.
\end{proof}

\subsection{Generalisations on parity games and Muller games}\label{sect:pg-mg}

These infinite two-player games are played on graphs. Unfolding these graphs yields infinite trees, so Borel determinacy may imply determinacy for these games. However, Borel determinacy does not say whether there exist simple winning strategies, or more generally winning strategies satisfying some predicate. So, the results that are generalised in Section~\ref{sect:pg-mg} are not mere corollaries of Borel determinacy. The definitions below rephrase, \textit{e.g.}, \cite{GW06}.

\begin{definition}[Arena and strategy]
An arena is an object $\langle V, V',E,C,\gamma\rangle$ complying with the following:
\begin{itemize}
\item $V$ is a non-empty set of vertexes.
\item $V'\subseteq V$ are the vertexes owned by player $1$.
\item $E\subseteq V\times V$ are the edges of a sink-free graph, \textit{i.e.} $\forall x\in V,xE:=\{y\in V\,\mid\,xEy\}\neq\emptyset$.
\item $C$ is a non-empty set of colours.
\item $\gamma:V\to C$ assigns a colour to each of the vertexes. 
\end{itemize} 
A strategy of player $1$ (resp. $2$) is a function of (dependent) type $V^*\to\forall v\in V',\, vE$ (resp. $V^*\to\forall v\in V\backslash V',\, vE$). A strategy profile is a function of (dependent) type $V^*\to\forall v\in V,\, vE$. The combination $(s_1,s_2)$ of a strategy $s_1$ for players $1$ and $s_2$ for player $2$ amounts to a strategy profile, which, when starting from a given vertex, induces a unique infinite sequence of colours $\Gamma(s_1,s_2)\in C^{\mathbb{N}}$. 
\end{definition}

\begin{definition}[multi-outcome priority/Muller games]
A multi-outcome priority (resp. Muller) game is an object $\langle\mathcal{G},O,r,\{\prec_1,\prec_2\}\rangle$ complying with the following:
\begin{itemize}
\item $\mathcal{G}$ is an arena where $C=\mathbb{N}$ (resp. $\mathcal{G}$ a is finite arena) as defined above.
\item $O$ is a non empty set of outcomes.
\item $r:\mathbb{N}\cup\{\bot\}\to O$ (resp. $r:\mathcal{P}(C)\to O$)
\item $\prec_1$ and $\prec_2$ are binary relations over $O$, the preferences.
\end{itemize}
For every infinite sequence of colours $\Gamma$, let $cl(\Gamma)$ be its cluster set, \textit{i.e.} the set of the colours occurring infinitely often in $\Gamma$. The outcome that is induced by a sequence of colours $\Gamma\in C^{\mathbb{N}}$, \textit{i.e.} by a strategy profile and a starting vertex, is: 
\begin{itemize}
\item For Muller games, $r\circ cl(\Gamma)$.
\item For priority games, $r\circ min\circ cl(\Gamma)$ if $cl(\Gamma)\neq\emptyset$, otherwise $r(\bot)$.
\end{itemize}
\end{definition}

Note that setting $O:=\{(1,0),(0,1)\}$ and $(0,1)\prec_1 (1,0)$ and $(1,0)\prec_2 (0,1)$ (plus $r(2n):=r(\bot):=(1,0)$ and $r(2n+1):=(0,1)$ for a multi-outcome priority game) in the definition above yields a parity (resp. Muller) game, up to isomorphism.

It was proved in~\cite{GH82} that Muller games are determined through finite-memory strategies and in~\cite{GW06} that parity games with priorities in $\mathbb{N}$ are positionally determined. Since these are determinacy results, let us extend them to multi-outcome settings below. (Note that one need not know what positional or finite-memory means.)

\begin{corollary}\label{cor:mo-mg}
Every multi-outcome Muller game (initiated with a starting vertex) with acyclic preferences has a finite-memory Nash equilibrium. 
\end{corollary}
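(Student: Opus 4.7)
The plan is to invoke Theorem~\ref{thm:et} with $R_1, R_2$ taken to be the finite-memory strategies for players $1$ and $2$ in the arena $\mathcal{G}$. Fix a starting vertex $v_0$ and translate the multi-outcome Muller game $\langle \mathcal{G},O,r,\{\prec_1,\prec_2\}\rangle$ into a game in normal form as sketched in Section~\ref{sect:gmt}: strategy sets $S_1$ and $S_2$ are the player strategies in $\mathcal{G}$, and $v(s_1,s_2):=r\circ cl(\Gamma(s_1,s_2))$ where $\Gamma(s_1,s_2)\in C^{\mathbb{N}}$ is the colour sequence produced from $v_0$ by the profile. Finite-memory Nash equilibria of the original Muller game correspond exactly to Nash equilibria of this normal-form game that live in $R_1\times R_2$.

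Next I would dispose of the hypothesis on preferences. Since $\mathcal{G}$ is finite, $C$ is finite, hence $\mathcal{P}(C)$ is finite, hence $r(\mathcal{P}(C))$ and \emph{a fortiori} $v(S_1,S_2)$ are finite. As is done inside the proof of Theorem~\ref{thm:et}, replace $O$ by $v(S_1,S_2)$; this does not change the set of Nash equilibria. The preferences, restricted to this finite outcome set, remain acyclic, and an acyclic relation on a finite set has finite height. Thus the second bullet of Condition~\ref{cond:thm-et2} is met.

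It remains to verify Condition~\ref{cond:thm-et1}, i.e.\ that the induced game structure is determined via strategies in $R_1$ and $R_2$. So fix any $wl:O\to\{(1,0),(0,1)\}$. The derived win-lose game $\langle S_1,S_2,wl\circ v\rangle$ is again a (win-lose) Muller game on the same arena $\mathcal{G}$: its winning condition for player $1$ is given by the family $\mathcal{F}:=\{X\subseteq C\mid wl(r(X))=(1,0)\}$ applied to the cluster set of the induced colour sequence from $v_0$. The finite-memory determinacy theorem for Muller games (Gurevich--Harrington~\cite{GH82}) then yields a finite-memory winning strategy for one of the players, i.e.\ a winning strategy in $R_1$ or in $R_2$. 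All conditions of Theorem~\ref{thm:et} are satisfied, and its conclusion produces a Nash equilibrium in $R_1\times R_2$, namely a finite-memory Nash equilibrium of the original multi-outcome Muller game.

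There is no real obstacle here; the only subtlety is the implicit finite-outcome reduction that turns the bare acyclicity assumption on $\prec_1,\prec_2$ into the finite-height assumption actually used by Theorem~\ref{thm:et}, which is made free by the finiteness of the arena.
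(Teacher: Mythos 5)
Your proof is correct and follows essentially the same route as the paper: embed the Muller game into normal form, use finiteness of the arena to reduce to finitely many outcomes, and get determinacy via finite-memory strategies from Gurevich--Harrington for every derived win-lose game. The only cosmetic difference is that the paper invokes Lemma~\ref{lem:et} directly (acyclic preferences plus a finite enforceable $\prec_i$-upward cone, the first bullet of Condition~\ref{cond:thm-et2}), whereas you pass through the finite-height bullet of Theorem~\ref{thm:et} after restricting $O$ to $v(S_1,S_2)$ --- which the theorem's proof reduces to that same lemma anyway.
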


\begin{proof}
Let $\langle V, V',E,C,\gamma,O,r,\{\prec_1,\prec_2\}\rangle$ be a multi-outcome Muller game with acyclic preferences, and let $v_0\in V$ be the starting vertex. Since it is naturally embedded into a game in normal form (as far as NE are concerned), it suffices to invoke Lemma~\ref{lem:et} to prove the claim, where Conditions~\ref{cond:lem-et2} and \ref{cond:lem-et3} are fulfilled by assumption and finiteness of the game, respectively. Finally, Condition~\ref{cond:lem-et1} is also fulfilled: indeed, for every $wl:O\to\{(1,0),(0,1)\}$, the derived win-lose game $\langle V, V',E,C,\gamma,wl\circ r\rangle$ is a Muller game, so by \cite{GH82} it is determined \textit{via} finite-memory strategies.
\end{proof}

\begin{corollary}\label{cor:mo-pg}
Every multi-outcome priority game where preferences have finite height has a positional Nash equilibrium.
\end{corollary}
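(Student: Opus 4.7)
The plan is to apply Theorem~\ref{thm:et} in close analogy with Corollary~\ref{cor:mo-mg}, invoking the second bullet of Condition~\ref{cond:thm-et2} (finite-height preferences) rather than Lemma~\ref{lem:et}, since the arena of a priority game need not be finite and therefore Condition~\ref{cond:lem-et3} of Lemma~\ref{lem:et} may fail. Given the multi-outcome priority game $\langle V, V', E, \mathbb{N}, \gamma, O, r, \{\prec_1,\prec_2\}\rangle$ together with a starting vertex $v_0$, I would first embed it into a game in normal form whose strategies are those of the two players, and let $R_1, R_2$ be the sets of positional strategies. Condition~\ref{cond:thm-et2} of Theorem~\ref{thm:et} is then immediate from the finite-height hypothesis, so it only remains to verify Condition~\ref{cond:thm-et1}, namely that the induced game structure is determined via positional strategies.

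For Condition~\ref{cond:thm-et1}, fix an arbitrary $wl: O\to\{(1,0),(0,1)\}$; the derived win-lose game has winning condition $wl\circ r\circ\min\circ cl$ (with $wl(r(\bot))$ used on plays whose cluster set is empty). To match the positional determinacy result for parity games with priorities in $\mathbb{N}$ from \cite{GW06}, I would re-colour the arena: letting $W := \{c\in\mathbb{N}\,\mid\,wl(r(c))=(1,0)\}$, define $\phi:\mathbb{N}\to\mathbb{N}$ by $\phi(c):=2c$ if $c\in W$ and $\phi(c):=2c+1$ otherwise, and replace $\gamma$ by $\phi\circ\gamma$. Because $\phi$ is strictly increasing, $\min\phi(S)=\phi(\min S)$ for every non-empty $S\subseteq\mathbb{N}$, so the parity of the least priority occurring infinitely often in the re-coloured play equals the winner of the derived game. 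By \cite{GW06} the re-coloured parity game is positionally determined, and a positional winning strategy there is still positional in the original arena, so Condition~\ref{cond:thm-et1} holds and Theorem~\ref{thm:et} yields a positional Nash equilibrium in $R_1\times R_2$.

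The main (minor) obstacle is the edge case of plays whose cluster set is empty: the winner there must be consistent with $wl(r(\bot))$ under the convention used in \cite{GW06}. If the default convention disagrees with the desired assignment, a one-line adjustment (e.g.\ shifting $\phi$ so as to flip the default winner) suffices. Everything else is the verbatim analogue of Corollary~\ref{cor:mo-mg}, the only genuine content being the re-colouring $\phi$ that bridges the multi-outcome $\min$-based winning condition and the parity framework of \cite{GW06}.
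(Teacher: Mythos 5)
Your proof is correct and follows essentially the same route as the paper's: embed the priority game into normal form, verify Condition~\ref{cond:thm-et1} by recolouring so that each derived win-lose game becomes a parity game that is positionally determined by \cite{GW06}, and invoke the finite-height case of Theorem~\ref{thm:et} (your observation that Lemma~\ref{lem:et} is unavailable because the arena and outcome set need not be finite is accurate and is glossed over in the paper). The one imprecision is your proposed fix for the empty-cluster-set mismatch: shifting $\phi$ by one flips the parity of the least priority of every play with non-empty cluster set while leaving the default winner of empty-cluster plays unchanged, which is the opposite of what is needed; the correct one-line adjustment is to swap which player is ``player~1'' of the parity game (harmless, since positional determinacy is symmetric in the players), and the paper sidesteps the issue by keying its recolouring to $wl\circ r(\bot)$ rather than to $(1,0)$, declaring a colour even exactly when its outcome agrees with the winner of the empty cluster set.
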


\begin{proof}
The proof is similar to the proof of Corollary~\ref{cor:mo-mg}, up to one point: let $wl:O\to\{(1,0),(0,1)\}$, the derived win-lose game $\langle V, V',E,C,\gamma,wl\circ r\rangle$ is not obviously isomorphic to a parity game, because the function $wl\circ r$ may not map even numbers and $\bot$ to $(1,0)$, and odd numbers to $(0,1)$. This is easily overcome by renaming the colours: let $\gamma'(v):=2\cdot\gamma(v)$ if $wl\circ r\circ\gamma(v)=wl\circ r(\bot)$ and $\gamma'(v):=2\cdot\gamma(v)+1$ otherwise. By \cite{GW06} the parity game $\langle V, V',E,C,\gamma'\rangle$ is determined, and so is $\langle V, V',E,C,\gamma,wl\circ r\rangle$.
\end{proof}

Note that, in the area of graph games for program verification, \cite{Ummels05} has already investigated extensions of determinacy in various directions, namely for subgame perfect equilibrium (a stronger notion of Nash equilibrium), for $n$-player games instead of two-player games, or for payoff functions in $\{0,1\}^n$ instead of $\{(0,1),(1,0)\}$. For instance, Theorem 4.19. in \cite{Ummels05} states that any initialised two-player parity game has a positional subgame perfect equilibrium and Theorem 4.20. states that any initialised finite multiplayer parity game has a finite-state subgame perfect equilibrium.

\section{Limitations of transfer possibilities}\label{sect:ltt}

Section~\ref{sect:lim-order-set} below suggests that the order and set-theoretic assumptions of Theorem~\ref{thm:et} are tight, although the case where exactly one strategy set is countable and the preferences are well-founded yet with chains of arbitrary length is still open; Section~\ref{sect:3player} afterwards suggests that the two-player assumptions of Theorem~\ref{thm:et} is tight, although there is still room for a three-player version of the equilibrium-transfer theorem since I failed to find a counterexample in the most general case.

\subsection{Order and set-theoretic limitations}\label{sect:lim-order-set}

Proposition~\ref{prop:lim-uncountable} below shows that the countability condition of Theorem~\ref{thm:et} is difficult to weaken in general. 

\begin{proposition}\label{prop:lim-uncountable}
There exists a game satisfying the following:
\begin{itemize}
\item player $1$ has countably many strategies,
\item the preference of player $1$ has no infinite ascending chain,
\item the preference of player $2$ has one maximum and the other outcomes are minimal,
\item the underlying game structure is determined,
\item the game has no Nash equilibrium.
\end{itemize}
\end{proposition}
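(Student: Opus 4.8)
The plan is to exhibit one explicit game and then verify the five bullet points separately, the only non-routine one being determinacy. For the underlying data I take $S_1=\mathbb{N}$, outcome set $O=\mathbb{N}\cup\{m\}$, player~$2$'s preference with $m$ as unique maximum and every natural minimal (so $o\prec_2 m$ for $o\neq m$ and nothing else), and player~$1$'s preference with $m$ as unique $\prec_1$-least element together with a reverse-type order on the naturals, so that $\prec_1$ has arbitrarily long but no infinite ascending chain (this rules out finite height, which is what would let Theorem~\ref{thm:et} apply and produce an equilibrium). Player~$2$'s strategies form an \emph{uncountable} family of ``columns'' $\sigma$, each assigning an outcome to every $n\in S_1$, with $v(n,\sigma)$ the value $\sigma$ gives to $n$. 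I split $S_1$ into \emph{trapped} rows, on which $v$ never equals $m$, and \emph{non-trapped} rows, from which player~$2$ can reach $m$; the non-trapped rows are what let player~$2$ ``escape''.

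The no-equilibrium part is the easy half and I would do it first. Consider any profile $(n,\sigma)$. If $v(n,\sigma)=m$, then because $\sigma$ is arranged never to be constantly $m$, player~$1$ can move to a row yielding a natural, which he strictly prefers to his worst outcome $m$. If $v(n,\sigma)$ is a natural and $n$ is non-trapped, player~$2$ switches columns to reach $m$, which is his maximum, so he is unstable. The whole design is driven by the single requirement that in \emph{every} column the $\prec_1$-best (i.e.\ $\prec_1$-maximal) cells occur only on non-trapped rows; granting this, on a trapped row player~$1$ always has a strictly $\prec_1$-better cell available in the same column, so he is unstable there as well. Hence no profile is stable and the game has no Nash equilibrium. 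Countability of $S_1$ and the order conditions on $\prec_1,\prec_2$ are immediate from the data.

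The determinacy of the structure is the main obstacle, and I expect essentially all the work to sit here. By Remark~\ref{rmk:d-e} I must show that for \emph{every} $P\subseteq O$ either player~$1$ enforces $P$ (some row's value-set is contained in $P$) or player~$2$ excludes it (some column's value-set is contained in $O\setminus P$). The difficulty is a genuine tension with the no-equilibrium design: forbidding any trapped row from ever realising a column-minimum forces the $\prec_1$-small (best) outcomes onto non-trapped rows, yet if one fixed best outcome had to appear in every column, then the coloring $P=\{\text{that outcome}\}$ would be neither enforceable by player~$1$ (that would need a constant row, which reintroduces an equilibrium) nor excludable by player~$2$, leaving the structure undetermined. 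The resolution I would pursue is to let the $\prec_1$-maximal outcomes form an infinite set that is realised \emph{variably} across columns, so that for each ``small'' coloring some column avoids it; this is exactly where the uncountability of $S_2$ is indispensable, since player~$2$ needs that many strategies to exclude every sparse and co-sparse outcome-set that player~$1$ cannot force.

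Finally I would make explicit the connection that explains why the example must look like this: this is precisely the configuration the third case of Theorem~\ref{thm:et} rules out when \emph{both} strategy sets are countable. There, from the absence of an equilibrium one extracts the least-outcome sets $A,B$ and feeds them to Lemma~\ref{lem:countable-finite}, whose bottom-to-top implication manufactures a set $C$ contradicting determinacy. The whole point of the proposition is that Lemma~\ref{lem:countable-finite} fails once the second coordinate ranges over an uncountable index rather than $\mathbb{N}$: the mutual-induction enumeration in its proof breaks down, and the counterexample lives exactly in the gap this opens. Thus the cleanest way to certify the construction is to show that the would-be separating set $C$ cannot be built, i.e.\ that every transversal of the (countably many) row value-sets contains a whole column value-set, which is the combinatorial heart of the determinacy check and the step I expect to be hardest to get right.
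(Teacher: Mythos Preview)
Your proposal is a plan, not a proof: you never actually define $S_2$ or $v$, and you explicitly defer the one step that carries all the content, namely determinacy. Everything you write about ``trapped'' versus ``non-trapped'' rows and ``variably realised $\prec_1$-maximal outcomes'' is a list of desiderata, not a construction, and nothing in the text shows that these desiderata can be met simultaneously. In particular your no-equilibrium argument already presupposes the unproved structural fact that in \emph{every} column the $\prec_1$-maximal cells sit on non-trapped rows; until $v$ is written down, that is an assumption, not a conclusion. The final paragraph (about Lemma~\ref{lem:countable-finite} failing over an uncountable index) is correct as motivation but does not substitute for an explicit witness.

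The paper's proof supplies exactly the missing idea, and it is short enough that your whole ``main obstacle'' evaporates. Take $S_1$ to be the \emph{cofinite} subsets of $\mathbb{N}$ together with two extra tokens $\alpha,\beta$ (so $S_1$ is countable), $S_2$ to be the \emph{infinite} subsets of $\mathbb{N}$ together with $\alpha,\beta$ (uncountable), outcomes $\mathbb{N}\cup\{a,b\}$, and let $v(X,Y)$ be, for $X,Y\subseteq\mathbb{N}$, the second-least element of $X\cap Y$ (and suitable values $a,b,\min X,\min Y$ on the $\alpha,\beta$ margins). The point is that $v(X,Y)\in X$ and $v(X,Y)\in Y$ whenever $X,Y\subseteq\mathbb{N}$, so determinacy is a one-line dichotomy: for any $P\subseteq O$, either $P\cap\mathbb{N}$ is cofinite and player~$1$ enforces $P$ by playing that cofinite set, or $\mathbb{N}\setminus P$ is infinite and player~$2$ excludes $P$ by playing it. The $\alpha,\beta$ margins then host a matching-pennies $2\times2$ block on the two special outcomes $a,b$, and the ``second-least'' trick ensures that from any $(X,Y)$ with $X,Y\subseteq\mathbb{N}$ player~$1$ strictly improves by jumping to $\alpha$ or $\beta$. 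Your single special outcome $m$ and the trapped/non-trapped split are groping toward this, but without the cofinite/infinite pairing you have no mechanism that makes determinacy automatic, and that is precisely the gap.
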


\begin{proof}
Let $I$ (resp. $C$) be the infinite (resp. cofinite) subsets of the naturals. Consider the two-player game structure $\langle \{1,2\}, (C\cup\{\alpha,\beta\})\times(I\cup\{\alpha,\beta\}),\mathbb{N}\cup\{a,b\},v\rangle$ where the unions are disjoints and where $v$ is as below and $min$ refers to the usual order over $\mathbb{N}$. 
\[\begin{array}{cllll}
v: & (C\cup\{\alpha,\beta\})\times(I\cup\{\alpha,\beta\}) &\to& \mathbb{N}\cup\{a,b\}\\
  & (X,Y)&\mapsto & a				&\mbox{ if } X=Y\in\{\alpha,\beta\}\\
  		&&& b				&\mbox{ if } \{X\}\cup\{Y\}=\{\alpha,\beta\}\\
		&&& min(X)				&\mbox{ if } (X,Y)\in C\times\{\alpha,\beta\}\\
  		&&& min(Y) 				&\mbox{ if } (X,Y)\in\{\alpha,\beta\}\times I\\
		&&& min(X\cap Y-\{min\, X\cap Y\})	&\mbox{ if } X,Y\notin\{\alpha,\beta\}\\
\end{array}\]

Let us first show the determinacy of the game structure. Let $P\subseteq \mathbb{N}\cup\{a,b\}$. If $P\cap\mathbb{N}$ is cofinite, player $1$ can enforce it (by playing it) since $v(X,Y)$ is in $X$ for all $X\in C$ and $Y\in I\cup\{\alpha,\beta\}$, by definition of $v$; so a fortiori player $1$ can enforce $P$. If $P\cap\mathbb{N}$ is not cofinite, $\mathbb{N}\backslash P$ is infinite, so player $2$ can enforce it, and a fortiori  $(\mathbb{N}\cup\{a,b\})\backslash P$.

Second, let us define some preferences: for all $x\in\mathbb{N}\cup\{a\}$, set $x\prec_2b$. Set $b\prec_1 a$ and $n+k+1\prec_1n\prec_1a$ for all $n,k\in\mathbb{N}$. Let us now show that the game has no Nash equilibrium.

\begin{itemize}
\item $v(\alpha,\alpha)<_2v(\alpha,\beta)$ and $v(\beta,\beta)<_2v(\beta,\alpha)$.
\item $v(\alpha,\beta)<_1v(\beta,\beta)$ and $v(\beta,\alpha)<_1v(\alpha,\alpha)$.
\item If $X\in C$ and $Y\in\{\alpha,\beta\}$ then $v(X,Y)<_1v(Y,Y)=a$.
\item If $X\in\{\alpha,\beta\}$ and $Y\in I$ then $v(X,Y)<_2v(X,X')=b$, where $\{X,X'\}=\{\alpha,\beta\}$.
\item If $X,Y\notin\{\alpha,\beta\}$ then $v(X,Y)<_1v(\alpha,Y)$ since $min(X\cap Y-\{min X\cap Y\})<min(Y)$.
\end{itemize}
\end{proof}

Note that replacing "player $1$ has countably many strategies" with "player $2$ has countably many strategies" in Proposition~\ref{prop:lim-uncountable} yields a correct statement. In the proof, indeed, it suffices to swap the sets $I$ and $C$ in the definition of the witness game.

Proposition~\ref{prop:lim-bounded-height} below shows that the (uniform) finite height condition of Theorem~\ref{thm:et} is difficult to weaken in general. The proof idea is similar to that of Proposition~\ref{prop:lim-uncountable}, albeit slightly more complex.

\begin{proposition}\label{prop:lim-bounded-height}
There exists a game satisfying the following:
\begin{itemize}
\item the preference of player $1$ has no infinite chain,
\item the preference of player $2$ has one maximum and the other outcomes are minimal,
\item the underlying game structure is determined,
\item the game has no Nash equilibrium.
\end{itemize}
\end{proposition}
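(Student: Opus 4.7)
The plan is to adapt the construction of Proposition~\ref{prop:lim-uncountable} by stratifying its $\mathbb{N}$-valued outcomes into finite blocks of unbounded size, so that $\prec_1$ becomes a countable family of pairwise incomparable finite chains sharing a common top $a$. Concretely, I would take the outcome set to be $O:=\{a,b\}\cup\{(n,k)\,:\,n\in\mathbb{N},\,0\le k\le n\}$ and declare $\prec_1$ by $(n,k+1)\prec_1(n,k)$ for $k<n$, $(n,k)\prec_1 a$ for all $(n,k)$, and $b\prec_1 a$, with no other related pairs; and $\prec_2$ keeps $b$ on top with every other outcome $\prec_2$-minimal, exactly as in Proposition~\ref{prop:lim-uncountable}. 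Every $\prec_1$-chain then lives inside a single block $\{(n,0),\ldots,(n,n)\}$ extended by at most one step to $a$, hence has length at most $n+2$; chains of arbitrary finite length therefore exist (take $n$ large) yet no chain is infinite.

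For the underlying game I would reuse the cofinite/infinite backbone of Proposition~\ref{prop:lim-uncountable}: player~1 picks in $C\cup\{\alpha,\beta\}$ (cofinite subsets of $\mathbb{N}$ plus two corners) and player~2 in $I\cup\{\alpha,\beta\}$ (infinite subsets plus two corners), with the four corner profiles yielding $a$ or $b$ as before. The mixed profile $(X\in C,Y\in I)$ now yields an outcome $(\min Y,k)$ at some position $k\ge 1$, so that the deviation $X\to\alpha$ slides the position to $(\min Y,0)$ and provides a strict $\prec_1$-improvement inside the same block; the profiles $(X\in C,\alpha/\beta)$ yield $(\min X,0)$, from which player~1 jumps to $a$, and the profiles $(\alpha/\beta,Y\in I)$ yield outcomes from which player~2 jumps to $b$. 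The absence of Nash equilibrium is then verified by an exhaustive case analysis on profile shape, exactly parallel to the one in Proposition~\ref{prop:lim-uncountable}.

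The main obstacle will be re-establishing determinacy of the enriched structure: the clean ``$P\cap\mathbb{N}$ cofinite versus $\mathbb{N}\setminus P$ infinite'' dichotomy of Proposition~\ref{prop:lim-uncountable} no longer partitions $\mathcal{P}(O)$ into enforceable subsets, because each level $n$ now carries two reachable outcomes $(n,0)$ and $(n,1)$ and a subset $P\subseteq O$ may cut across this level structure. This is presumably the ``slightly more complex'' point alluded to in the paper. I would resolve it by adjoining one symmetric auxiliary strategy to each player, designed so that (i) for every $P$ that the cofinite/infinite dichotomy leaves undecided, exactly one of the two new strategies confines outcomes to $P$ or to $O\setminus P$, and (ii) every profile involving a new strategy still admits a profitable unilateral deviation (so no new Nash equilibria are created). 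Determinacy is then verified by a case split on how $P$ meets $\{a,b\}$ and on the projection of $P$ onto level indices, closely mirroring the verification in Proposition~\ref{prop:lim-uncountable}; the no-equilibrium check is re-run to cover the additional profile types introduced by the new strategies.
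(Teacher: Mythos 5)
There is a genuine gap, and it sits exactly where you flagged it: determinacy. Your order-theoretic idea (player $1$'s preference made of pairwise incomparable finite chains of unbounded length, all below a common top $a$) is the same as the paper's, and so is the overall template of adapting Proposition~\ref{prop:lim-uncountable}. But the construction is not actually completed, and the proposed patch cannot work. A single auxiliary strategy $s_1^*$ enforces precisely those $P$ containing its reachable-outcome set $v(s_1^*,S_2)$, and symmetrically for $s_2^*$; since the residually undecided subsets come in complementary pairs (e.g.\ $P_0:=\{a,b\}\cup\{(n,k):k\geq 1\}$ and $O\backslash P_0$ are both undecided in your backbone) and form an uncountable, highly incompatible family, one pair of strategies would be forced to have essentially singleton reachable sets, which in turn manufactures a Nash equilibrium (a strategy of player $2$ that always yields $b$ makes every profile containing it an equilibrium). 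Moreover, your assignment $v(X,Y):=(\min Y,k)$ for $X\in C$, $Y\in I$ destroys the one property that made determinacy work in Proposition~\ref{prop:lim-uncountable}, namely that a player's set-strategy $X$ always yields an outcome \emph{in} $X$, so that enforceability reduces to a clean dichotomy on $\mathcal{P}(\mathbb{N})$.

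The paper's actual resolution is structurally different on both counts. First, it rebuilds the $C/I$ dichotomy around the block structure itself: with $A_n:=\{k:n^2\leq k<(n+1)^2\}$, it takes $C:=\{X:\exists n,\forall m,\,|A_m\backslash X|\leq n\}$ and $I:=\{X:\forall n,\exists m,\,n\leq|X\cap A_m|\}$, which are exactly complementary ($X\notin C$ iff $\mathbb{N}\backslash X\in I$), keeps the outcomes as natural numbers ordered backwards within each block, and designs $v$ so that each set-strategy again enforces itself; determinacy then follows as before. Second, it uses \emph{infinitely many} corner strategies $\alpha_0,\alpha_1,\dots$ for each player rather than your two: when a mixed profile $(X,Y)\in C\times I$ lands in block $A_n$, player $1$'s improving deviation is to $\alpha_n$, which retargets the outcome to $\min(Y\cap\{n^2,n^2+1,\dots\})$ inside that same block; with only two corners player $1$ cannot aim at the relevant block, and the deviation chain that kills all equilibria breaks down. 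If you want to salvage your write-up, these two modifications --- block-adapted $C$ and $I$, and one corner strategy per block --- are not optional refinements but the substance of the proof.
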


\begin{proof}
For every $n\in\mathbb{N}$ let $A_n:=\{k\in\mathbb{N}\,\mid\,n^2\leq k<(n+1)^2\}$, and let $I:=\{X\subseteq\mathbb{N}\,\mid\,\forall n\in\mathbb{N},\exists m\in\mathbb{N},n\leq|X\cap A_m|\}$, and let $C:=\{X\subseteq\mathbb{N}\,\mid\,\exists n\in\mathbb{N},\forall m\in\mathbb{N},|A_m\backslash X|\leq n\}$. Consider the two-player game structure that is defined by its outcome function below, where $S_1:=C\cup\{\alpha_0,\alpha_1,\dots\}$ and $S_2:=I\cup\{\alpha_0,\alpha_1,\dots\}$, where the unions are disjoints, and where $min$ refers to the usual order over $\mathbb{N}$. . 
\[\begin{array}{cllll}
v: & S_1\times S_2 &\to& \mathbb{N}\cup\{a,b\}\\
  & (X,Y)&\mapsto & a				&\mbox{ if } X=Y=\alpha_n\\
  		&&& b				&\mbox{ if } (X,Y)=(\alpha_i,\alpha_j) \mbox{ with } i\neq j\\
		&&& min(X)				&\mbox{ if } X\in C \mbox{ and } Y=\alpha_n\\
  		&&& min(Y\cap \{n^2,n^2+1,\dots\}) 	&\mbox{ if } X=\alpha_n \mbox{ and } Y\in I\\
		&&& min(X\cap Y\cap A_n-\{min\, X\cap Y\cap A_n\})	&\mbox{ if } (X,Y)\in C\times I,\mbox{ where }\\
		&&&							&n:=min\{k\,\mid\,2\leq|X\cap Y\cap A_k|\}
\end{array}\]

Note that in the last line of the definition above, $\{k\,\mid\,2\leq|X\cap Y\cap A_k|\}$ is non-empty by definition of $C$ and $I$.

Let us first show the determinacy of the game structure. Let $P\subseteq \mathbb{N}\cup\{a,b\}$. If $P\cap\mathbb{N}$ is in $C$, player $1$ can enforce it (by playing it) since $v(X,Y)$ is in $X$ for all $X\in C$ and $Y\in I\cup\{\alpha_0,\alpha_1,\dots\}$, by definition of $v$; so a fortiori player $1$ can enforce $P$. If $P\cap\mathbb{N}$ is not in $C$, $\mathbb{N}\backslash P$ is in $I$, by definition of $C$ and $I$, so player $2$ can enforce it, and a fortiori  $(\mathbb{N}\cup\{a,b\})\backslash P$.

Second, let us define some preferences: set $x\prec_2 b$ for all $x\in\mathbb{N}\cup\{a\}$; set $x\prec_1 a$ for all $x\in\mathbb{N}\cup\{b\}$ and $i\prec_1j$ whenever $n^2\leq j<i<(n+1)^2$ for $i,j,n\in\mathbb{N}$. The game thus defined has no Nash equilibrium, as shown below.
\begin{itemize}
\item $v(\alpha_n,\alpha_n)<_2v(\alpha_n,\alpha_{n+1})$ and $v(\alpha_i,\alpha_j)<_1v(\alpha_j,\alpha_j)$ for $i\neq j$.
\item If $X\in C$ and $Y=\alpha_n$ then $v(X,Y)<_1v(Y,Y)=a$.
\item If $X=\alpha_n$ and $Y\in I$ then $v(X,Y)<_2v(X,\alpha_{n+1})=b$.
\item If $(X,Y)\in C\times I$ then $v(X,Y)<_1v(\alpha_n,Y)$, where $n:=min\{k\,\mid\,2\leq|X\cap Y\cap A_k|\}$.
\end{itemize}
\end{proof}

\subsection{Three-player limitations}\label{sect:3player}

The equilibrium-transfer theorem considers two-player games only, which raises the issue of the existence of a three-player version of the theorem. However, the condition of application of such a version can no longer state a mere determinacy of the game structure, because the very notion of determinacy makes little sense for three players. Instead, the condition may require that some specific games that are derived from and are simpler than the original game all have Nash equilibria. (Note that in the two-player case, the determinacy of the game structure falls into this type of condition.) The counter-examples in this section suggest that there is no general three-player version, without giving a definitive answer, though.

Theorem~\ref{thm:et} can be rephrased as follows: if equipping a given two-player game structure with simple preferences (\textit{i.e.} free of three-outcome chains) always yields a game with a Nash equilibrium, so does equipping the same structure with more complex preferences. The example below shows that one cannot just replace "two-player" with "three-player" in the above statement, since preferences with three-outcome chains may already be problematic.

\begin{remark}\label{prop:3aet-simple}
Let $a$, $b$ and $c$ be three players, let $l$ and $r$ be two strategies available to each player, let $x$, $y$ and $z$ be three possible outcomes, let us define three transitive preferences by $z<_ay<_ax$ and $x<_by<_bz$ and $<_c:=<_b$, and define an outcome function as follows: $v(l,l,l):=v(l,r,l):=v(r,r,l):=y$ and $v(r,l,l):=v(l,l,r):=v(l,r,r):=z$ and $v(r,l,r):=v(r,r,r):=x$. See the graphical representation below, where players $a$, $b$, and $c$ choosing $l$ yields top rows, left columns, and left array, respectively. 

\[\begin{array}{c@{\hspace{1cm}}c}
\begin{array}{|c|c|}
\hline y & y\\
\hline z & y\\
\hline
\end{array}
&
\begin{array}{|c|c|}
\hline z & z\\
\hline x & x\\
\hline
\end{array}
\end{array}
\]
\begin{itemize}
\item The game $\langle\{a,b,c\},(\{l,r\})_{d\in\{a,b,c\}},\{x,y,z\},v,(<_d)_{d\in\{a,b,c\}}\rangle$ has no Nash equilibrium.
\item Let $\{0,1\}^3$ be an alternative set of outcomes, for $i,j,k,n\in\{0,1\}$ let $(0,i,j)\prec_a(1,k,n)$ and $(i,0,j)\prec_b(k,1,n)$ and $(i,j,0)\prec_c(k,n,1)$. Then for all $wl:\{x,y,z\}\to\{0,1\}^3$ the game\\
\noindent $\langle\{a,b,c\},(\{l,r\})_{d\in\{a,b,c\}},\{0,1\}^3,wl\circ v,(\prec_d)_{d\in\{a,b,c\}}\rangle$ has a Nash equilibrium.
\end{itemize}
\end{remark}

\begin{proof}
The original game has no Nash equilibrium, by construction. Let $wl:\{x,y,z\}\to\{0,1\}^3$ and from now on let us consider the modified game only, assume that it has no Nash equilibrium, and draw a contradiction. Both players $a$ and $b$ are stable \textit{w.r.t.} the strategy profile $(l,r,l)$ since $v(l,l,l)=v(l,r,l)=v(r,r,l)=y$ by construction, so $wl(y)=wl\circ v(l,r,l)\prec_cwl\circ v(l,r,r)=wl(z)$. So player $c$ is stable \textit{w.r.t.} the strategy profile $(l,r,r)$, and so is player $b$ since $v(l,l,r)=v(l,r,r)=z$, therefore $wl(z)=wl\circ v(l,r,r)\prec_a wl\circ v(r,r,r)=wl(x)$. So player $a$ is stable \textit{w.r.t.} the strategy profile $(r,r,r)$, and so is player $b$ since $v(r,l,r)=v(r,r,r)=x$, so $wl(x)=wl\circ v(r,r,r)\prec_c wl\circ v(r,r,l)=wl(y)$. Therefore $wl(x)\prec_c wl(y)\prec_c wl(z)$, contradiction since $\prec_c$ has no three-outcome chain.
\end{proof}

A weaker three-player version of Theorem~\ref{thm:et} might obtain by using a large-enough natural number $n$ as follows: "if equipping a given three-player game structure with preferences free of $(n+1)$-outcome chains always yields a game with a Nash equilibrium, so does equipping the same structure with more complex preferences." Proposition~\ref{prop:3aet} below shows that preferences with $(n+1)$-outcome chains may already be problematic. (Note that the case $n=2$ corresponds to Remark~\ref{prop:3aet-simple}.)

\begin{proposition}\label{prop:3aet}
For every natural $2\leq n$ there exists a finite three-player game in normal form that complies with the following:
\begin{itemize}
\item The preferences are linear orders over $\{0,1,\dots,n\}$.
\item The game has no Nash equilibrium.
\item Replacing the preferences with preferences that have no chain of length $n+1$ yields a game with a Nash equilibrium.
\end{itemize}
\end{proposition}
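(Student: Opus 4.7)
The approach is to generalize Remark~\ref{prop:3aet-simple} so that the chase-style no-NE argument used there produces a chain of $n+1$ outcomes in one player's preference rather than just three. Recall that in the $n=2$ proof one starts at a profile, observes that two players are forced to be stable because they see the same outcome at all their alternative strategies from that profile, concludes that the third player (there $c$) must strictly prefer the deviation target, and threads these strict preferences together to obtain a 3-chain in $c$'s preference. For general $n$, I would engineer a three-player game with outcomes $\{0,1,\ldots,n\}$ in which the analogous chase forces $n$ strict inequalities $0\prec_c 1\prec_c\cdots\prec_c n$ in one player's preference.

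Concretely, I would take a three-player game with enough strategies (at least for player $c$, plus whatever $a$ or $b$ need to thread the chase through $n$ steps) and define the outcome function so that there are $n$ designated ``critical'' profiles $P_0,\ldots,P_{n-1}$ with $v(P_i)=i$, such that (i) at each $P_i$ both $a$ and $b$ see the same outcome $i$ at all their alternative strategies (so only $c$ could possibly deviate profitably from $P_i$), and (ii) the unique profitable $c$-deviation from $P_i$ yields a profile with outcome $i+1$. The preferences are then chosen as linear orders on $\{0,1,\ldots,n\}$ so that the chain forced by the chase is exactly $0\prec_c 1\prec_c\cdots\prec_c n$, while $a$'s and $b$'s preferences are arranged to be consistent with these forced $c$-deviations.

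Verification splits in two. The absence of a Nash equilibrium in the original game follows from the chase: from any profile one can trace the forced deviations through all $n$ critical profiles and arrive at the full chain in $c$'s preference, whose sheer length is compatible with the original linear preference but will be the very thing that fails under the bounded-height hypothesis. Conversely, given preferences of height at most $n$, the chain $0\prec_c 1\prec_c\cdots\prec_c n$ cannot hold in full, so for some $i$ the link $i\prec_c i{+}1$ breaks; then $c$ no longer strictly improves at $P_i$, while $a$ and $b$ were already stable at $P_i$ by construction, so $P_i$ is a Nash equilibrium. A symmetric construction (or symmetric roles for the three players in a single game) covers the case where it is $a$'s or $b$'s preference that is the limiting chain.

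The main obstacle is the concrete design of the outcome function and the matching preferences. One must ensure simultaneously that at each critical profile $P_i$ the non-deviating players are truly indifferent among all their alternatives (so the chase is forced to use $c$), that the unique exit from $P_i$ really lands on outcome $i+1$ (no shortcuts through other outcomes), and that at non-critical profiles there is always some profitable deviation (so no accidental Nash equilibrium arises in the original game). Parity of $n$ is a delicate point, since with only two strategies for $c$ the chase flips $c$'s strategy $n$ times and cannot return to its starting value when $n$ is odd; this suggests giving $c$ (or using $a$ or $b$ to bridge) more than two strategies in the general construction. Finally, the NE-existence argument must uniformly cover bounded-height preferences for all three players, which is likely handled by making the three roles interchangeable in the underlying structure.
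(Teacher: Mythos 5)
Your plan is exactly the one the paper follows: generalise the chase of Remark~\ref{prop:3aet-simple} so that the absence of a Nash equilibrium forces an $(n{+}1)$-outcome chain in a single designated player's preference, and read off the contrapositive to get an equilibrium under the bounded-chain hypothesis. However, what you have written is a blueprint rather than a proof: the entire mathematical content of Proposition~\ref{prop:3aet} lies in the ``main obstacle'' you defer, namely exhibiting an outcome function for which (i) the critical profiles really leave only one player free to deviate, (ii) the forced deviations really thread into the full chain, and (iii) no stray profile is accidentally an equilibrium. Until that construction is written down and verified, the claim is not established, so as it stands the proposal has a genuine gap.

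For comparison, the paper resolves the issues you flag as follows. Each player gets $n$ strategies (not two, which disposes of your parity worry), the critical profiles are the diagonal ones $(i,i,i)$ for $i<n$, and a default outcome $1$ is assigned almost everywhere else so that non-critical profiles always admit a profitable deviation. At $(i,i,i)$ one has $v(i,i,i)=v(j,i,i)=v(i,j,i)=i$, so players $a$ and $b$ are stable, while $c$'s deviations land in $\{1,i,i+1\}$; acyclicity plus the no-equilibrium hypothesis then gives $i\prec_c 1$ or $i\prec_c i{+}1$, and a short induction starting from $1\prec_c 2$ eliminates the first branch, yielding $1\prec_c 2\prec_c\cdots\prec_c n$. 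Two extra profiles in the slice $k=n$ supply the missing link $0\prec_c 1$. Note also that your final concern about covering all three players is unnecessary: it suffices to show that no equilibrium implies an $(n{+}1)$-chain in $\prec_c$ alone, since the hypothesis bounds the chains of \emph{all} three preferences and in particular $\prec_c$. The original linear preferences are then chosen as the usual order for $b$ and $c$ and its inverse for $a$, and the no-equilibrium claim for that specific game is checked by an explicit case split over all profiles --- a verification your sketch does not yet contain.
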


\begin{proof}
Let $2\leq n$ be a natural, let $<_b$ and $<_c$ be the restrictions of the usual order over the naturals to $\{0,\dots,n\}$, and let $<_a:=<_b^{-1}$, that is, $n<_an-1<_a\dots<_a1<_a0$. Let us define the outcome function $v:\{1,\dots,n\}^3\to\{0,\dots,n\}$ below.
\begin{itemize}
\item For $1\leq i\leq n$ let $v(n,i,n):=0$.
\item For $1\leq i<n$ let $v(i,i,n):=i+1$.
\item Otherwise let $v(\cdot,\cdot,n)$ return $n$.
\item For $1\leq i<n$ let $v(n,1,i):=n$.
\item For $1<i<n$ and $1\leq j\leq n$ let $v(i,j,i):=v(j,i,i):=i$.
\item Otherwise let $v$ return $1$.
\end{itemize}

For example, the game $\langle\{a,b,c\},(\{1,2,3,4\})_{d\in\{a,b,c\}},\{0,1,2,3,4\},v,(<_d)_{d\in\{a,b,c\}}\rangle$ is represented below, where player $a$ chooses the row, $b$ the column, and $c$ the array. 

\[\begin{array}{c@{\hspace{1cm}}c@{\hspace{1cm}}c@{\hspace{1cm}}c}
\begin{array}{|c|c|c|c|}
\hline 1 & 1 & 1 & 1\\
\hline 1 & 1 & 1 & 1\\
\hline 1 & 1 & 1 & 1\\
\hline 4 & 1 & 1 & 1\\
\hline
\end{array}
&
\begin{array}{|c|c|c|c|}
\hline 1 & 2 & 1 & 1\\
\hline 2 & 2 & 2 & 2\\
\hline 1 & 2 & 1 & 1\\
\hline 4 & 2 & 1 & 1\\
\hline
\end{array}
&
\begin{array}{|c|c|c|c|}
\hline 1 & 1 & 3 & 1\\
\hline 1 & 1 & 3 & 1\\
\hline 3 & 3 & 3 & 3\\
\hline 4 & 1 & 3 & 1\\
\hline
\end{array}
&
\begin{array}{|c|c|c|c|}
\hline 2 & 4 & 4 & 4\\
\hline 4 & 3 & 4 & 4\\
\hline 4 & 4 & 4 & 4\\
\hline 0 & 0 & 0 & 0\\
\hline
\end{array}
\end{array}
\]

Let us show that the game $\langle\{a,b,c\},(\{1,\dots,n\})_{d\in\{a,b,c\}},\{0,\dots,n\},v,(<_d)_{d\in\{a,b,c\}}\rangle$ witnesses the claim. First, the preferences are linear orders indeed. Second, let us show that there is no Nash equilibrium by case-splitting below. 

\begin{itemize}
\item If $i,k\neq n$, then $v(i,j,k)<_cv(i,j,n)$.  
\item If $i\neq n$, then $v(i,j,n)<_a0=v(n,j,n)$.
\item $v(n,j,n)=0<_cv(n,j,1)$.
\item $v(n,1,1)=n\prec_a 1=v(1,1,1)$ and if $j\neq 1$, then $v(n,j,1)=1\prec_b n=v(n,1,1)$.
\item If $j\neq 1$ and $1<k<n$, then $v(n,j,k)<_bn=v(n,1,k)$.
\item If $1<k<n$, then $v(n,1,k)=n<_a1=v(1,1,k)$.
\end{itemize}

Third, by contraposition, let $\prec_a$, $\prec_b$ and $\prec_c$ be arbitrary acyclic preferences, assume that the game\\
$\langle\{a,b,c\},(\{1,\dots,n\})_{d\in\{a,b,c\}},\{0,\dots,n\},v,(\prec_d)_{d\in\{a,b,c\}}\rangle$ has no Nash equilibrium, and let us prove that $\prec_c=<_c$, thus contradicting the assumption on the chains. If $i\neq n$, by construction $v(i,i,i)=i=v(j,i,i)=v(i,j,i)$ and $v(i,i,j)\in\{1,i,i+1\}$. By assumption there is no Nash equilibrium and the preferences are acyclic, so $i\prec_c1$ or $i\prec_ci+1$ if $1\leq i< n$, so $1\prec_c 2$, and it is provable by induction that $i\prec_ci+1$ for all $1\leq i< n$. Now it suffices to prove $0\prec_c 1$ to conclude. By assumption the strategy profile $(n-1,n,n)$ is not a Nash equilibrium, so $v(n-1,n,n)=n\prec_a 0=v(n,n,n)$ since $v(i,n,n)=v(n-1,j,n)=n$ for $i\neq n$, since $v(n-1,n,k)\in\{1,n-1,n\}$, and by assumption of acyclic preferences. Now, the profile $(n,n,n)$ is not a Nash equilibrium either. Since $v(n,j,n)=0$, since $v(i,n,n)=n$ if $i\neq n$, and since $n\prec_a 0$, the players $a$ and $b$ are sable. Since $v(n,n,k)\in\{0,1\}$, we must have $v(n,n,n)=0\prec_c 1$.
\end{proof}

Note that in the statement of Proposition~\ref{prop:3aet}, one may also modify the games without Nash equilibrium so that they are zero-sum, by defining $z(n):=(-2n,n,n)$ and using the outcome function $z\circ v$ instead of $v$.

Let us now try to find an alternative equilibrium-transfer theorem that still states existence of Nash equilibrium in three-player game. As a lead, notice that many two-player game structures may be derived from a given three-player game structure: first, by slicing the game cuboid, \textit{i.e.} by fixing the strategy of one player; second, by merging any two players into a super player. We may hope that if all thus-derived two-player game structures are determined (and therefore also have Nash equilibria for more complex preferences), then the original three-player game structure has a Nash equilibrium when equipped with simple preferences. However, Proposition~\ref{prop:3sm1} contradicts it.

\begin{proposition}\label{prop:3sm1}
There exists a finite game structure $G=\langle\{a,b,c\},S_a,S_b,S_c,\{X,Y,Z\},v\rangle$ that satisfies the following:
\begin{itemize}
\item for all $s_c\in S_c$ the game structure $\langle\{a,b\},S_a,S_b,\{X,Y,Z\},v(\cdot,\cdot,s_c)\rangle$, which is obtained by slicing $G$ along $s_c$, is determined. (And similarly by slicing along some $s_a\in S_a$ or $s_b\in S_b$.)
\item the game structure $\langle\{a\times b,c\},S_a\times S_b,S_c,\{X,Y,Z\},v')\rangle$, where $v'((s_a,s_b),s_c):=v(s_a,s_b,s_c)$, which is obtained by merging players $a$ and $b$, is determined. (And similarly by merging $c$ with $a$ or $b$.)
\item instantiating $G$ with $X:=(1,0,0)$, $Y:=(0,1,0)$, and $Z:=(0,0,1)$ yields a game without Nash equilibrium.
\end{itemize}
\end{proposition}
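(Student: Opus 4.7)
The plan is to exhibit an explicit finite three-player game structure $G = \langle\{a,b,c\}, S_a, S_b, S_c, \{X,Y,Z\}, v\rangle$ and to check the three bulleted conditions one after another. For the third condition I would design $v$ so that the directed graph on strategy profiles, whose edges are the profitable unilateral deviations under the instantiation $X := (1,0,0)$, $Y := (0,1,0)$, $Z := (0,0,1)$, has no sink: for every profile $s$ some player not receiving his/her own $1$-payoff outcome has a unilateral move to a profile that does produce that outcome. By definition of Nash equilibrium this rules out the existence of one.

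For the determinacy of every slice and every merge I would use Remark~\ref{rmk:d-e}: it suffices to check that for every subset $P \subseteq \{X,Y,Z\}$ (equivalently, for each of the eight maps $wl : \{X,Y,Z\} \to \{(1,0),(0,1)\}$) one of the two players of the sliced or merged structure can enforce $P$ or exclude it. So each slice and each merge only produces eight win-lose games to verify. The merges are the easier half: the super-player obtained by merging two of the three players has Cartesian-product strategy space $S_i \times S_j$, which typically gives him/her enough pointwise control to enforce most $P$'s; the remaining $P$'s are then disposed of by exhibiting a strategy of the third player that excludes them.

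The main obstacle is reconciling the determinacy and the no-equilibrium requirements. A no-sink deviation graph tends to create matching-pennies-like patterns in two-dimensional slices, but a $2\times 2$ matching-pennies slice is exactly the undetermined win-lose game under the $wl$ that separates the two outcomes. Hence I would use strategy sets strictly larger than two for at least one player (three per player being a natural first attempt), so that every slice contains a safe row or column that dominates the potentially undetermined sub-patterns, while the three-dimensional outcome function still encodes the cyclic deviation structure needed in step three. Once a candidate $v$ is written down, the whole verification reduces to a finite, mechanical case analysis over the eight maps $wl$ and the finitely many slices and merges, analogous to the case split carried out in the proof of Proposition~\ref{prop:3aet}.
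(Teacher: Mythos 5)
Your plan correctly identifies both the right verification framework (Remark~\ref{rmk:d-e} reduces each slice and each merge to eight win-lose games) and the central obstruction (a profitable-deviation graph with no sink tends to force matching-pennies-like $2\times 2$ patterns, which are exactly the undetermined win-lose slices). This is the same approach the paper takes. But for an existence statement of this kind the proof \emph{is} the witness, and you never produce one: ``I would design $v$ so that\dots'' and ``once a candidate $v$ is written down\dots'' leave the only mathematically substantive step undone. As it stands there is no guarantee that a $v$ reconciling the two requirements exists at all, which is precisely what the proposition asserts.

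Moreover, your quantitative guess --- three strategies per player as ``a natural first attempt'' --- is at best unverified and probably too small. The paper's witness uses \emph{six} strategies per player: the $6\times6\times6$ cube is tiled by five cylinders whose cross sections are the $3\times 3$ Latin square on $\{X,Y,Z\}$. A lone Latin square is \emph{not} determined (neither player can enforce or exclude a singleton), so the cylinders are interleaved so that every two-dimensional slice also contains the ``outer face'' of some cylinder, i.e.\ three columns (or rows) on which one player unilaterally selects the outcome; that player can then enforce any nonempty subset, which is what rescues determinacy of every slice, while the three long cylinders ensure any merged pair of players can enforce any outcome. Reproducing this balance is where all the work lies; your proposal names the difficulty but does not resolve it.
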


\begin{proof}
Let us define a three-player game structure, each player having six strategies. Informally, the $6\times 6\times 6$ empty cube is filled with three $3\times 3\times 6$ cylinders and two $3\times 3\times 3$ cylinders. The cross section of each of these cylinders looks like this:

\[\begin{array}{|c|c|c|}
\hline X & Y & Z\\
\hline Y & Z & X\\
\hline Z & X & Y\\
\hline
\end{array}\]

Let us define the outcome function of the game formally below, where each of the five blocks of three lines corresponds to a cylinder.

\[\begin{array}{l@{\hspace{2cm}}l}
v(1,1,\cdot):=v(2,3,\cdot):=v(3,2,\cdot):=X & v(\cdot,4,1):=v(\cdot,5,3):=v(\cdot,6,2):=X\\
v(1,2,\cdot):=v(2,1,\cdot):=v(3,3,\cdot):=Y & v(\cdot,4,2):=v(\cdot,5,1):=v(\cdot,6,3):=Y\\
v(1,3,\cdot):=v(2,2,\cdot):=v(3,1,\cdot):=Z & v(\cdot,4,3):=v(\cdot,5,2):=v(\cdot,6,1):=Z\\
\\
v(4,\cdot,4):=v(5,\cdot,6):=v(6,\cdot,5):=X\\
v(4,\cdot,5):=v(5,\cdot,4):=v(6,\cdot,6):=Y\\
v(4,\cdot,6):=v(5,\cdot,5):=v(6,\cdot,4):=Z\\
\end{array}\]

For $1\leq i\leq 3$, set the following:

\[\begin{array}{l@{\hspace{2cm}}l}
v(4,1,i):=v(5,3,i):=v(6,2,i):=X & v(4,i,4):=v(5,i,6):=v(6,i,5):=X\\
v(4,2,i):=v(5,1,i):=v(6,3,i):=Y & v(4,i,5):=v(5,i,4):=v(6,i,6):=Y\\
v(4,3,i):=v(5,2,i):=v(6,1,i):=Z & v(4,i,6):=v(5,i,5):=v(6,i,4):=Z\\
\end{array}\]

A typical section of the whole game structure, actually $v(\cdot,\cdot,1)$, looks as below: the cross sections of two cylinders on the left-hand side, the "outer face" of another cylinder on the right-hand side:

\[\begin{array}{|c|c|c|c|c|c|}
\hline X & Y & Z & X & Y & Z\\
\hline Y & Z & X& X & Y & Z\\
\hline Z & X & Y& X & Y & Z\\
\hline X & Y & Z & X & Y & Z\\
\hline Y & Z & X& X & Y & Z\\
\hline Z & X & Y& X & Y & Z\\
\hline
\end{array}\]

Instantiating the game structure with $X:=(1,0,0)$, $Y:=(0,1,0)$, and $Z:=(0,0,1)$ yields a game without Nash equilibrium, because none of the cylinders constituting the whole game has a Nash equilibrium.

Nonetheless, all the two-player game structures that are derived from the three-player game structure by slicing or merging have a Nash equilibrium, as partially justified below.

Slicing: fix the strategy of c between $1$ and $3$ (resp. $4$ and $6$), then player $b$ (resp. $a$) can choose the outcome he/she wants. For instance fixing the strategy of $c$ to $1$ yields the game represented graphically above. Similar situations arise when fixing strategies of $a$ or $b$. 

Merging: The three $3\times 3\times 6$ cylinders ensure that any two players can collectively enforce any outcome. 
\end{proof}

The next and last idea is to combine the two previous attempts to get even stronger assumptions: given a game structure, we may hope that if slicing or merging it always yields determined game structures, and if equipping it with simple preferences always yields games with Nash equilibrium, then so does equipping it with more complex preferences. However, the following does not sound very promising.

\begin{proposition}\label{prop:3sm}
There exists a finite game structure $G=\langle\{a,b,c\},S_a,S_b,S_c,\{X,Y,Z\},v\rangle$ that satisfies the following:
\begin{itemize}
\item for all $s_c\in S_c$ the game structure $\langle\{a,b\},S_a,S_b,\{X,Y,Z\},v(\cdot,\cdot,s_c)\rangle$, which is obtained by slicing $G$ along $s_c$, is determined. (And similarly by slicing along some $s_a\in S_a$ or $s_b\in S_b$.)
\item the game structure $\langle\{a\times b,c\},S_a\times S_b,S_c,\{X,Y,Z\},v')\rangle$, where $v'((s_a,s_b),s_c):=v(s_a,s_b,s_c)$, which is obtained by merging players $a$ and $b$, is determined. (And similarly by merging $c$ with $a$ or $b$.)
\item Equipping $G$ with preferences that are free of three-outcome chains yields a game a Nash equilibrium.
\item Equipping $G$ with $Z<_aY<_aX$ and $X<_bZ<_bY$ and $Y<_cX<_cZ$ yields a game without Nash equilibrium.
\end{itemize}
\end{proposition}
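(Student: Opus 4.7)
The plan is to build the witness game by enriching the $6\times 6\times 6$ construction from Proposition~\ref{prop:3sm1}, which already satisfies the slicing and merging conditions, with a bank of extra ``safe'' strategies (one per player) whose role is solely to guarantee a Nash equilibrium whenever at least one player's preference lacks a three-outcome chain, while not upsetting the cyclic obstructions that kill the Nash equilibrium under the cyclic linear preferences $Z<_aY<_aX$, $X<_bZ<_bY$, $Y<_cX<_cZ$. Concretely, I would add, for each player $d$, a collection of strategies parameterised by (ordered or unordered) pairs in $\{X,Y,Z\}$; the outcome function on these extra strategies will be constant along the two opponents' axes (it depends only on $d$'s own strategy) and will return one of $X$, $Y$, $Z$ so that playing such a ``safe'' strategy pins the outcome down to a single value from $d$'s point of view.

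For slicing and merging, the proof reuses the argument of Proposition~\ref{prop:3sm1}: fixing any single player's strategy either falls within one of the original rock-paper-scissors cylinders (where the remaining two-player structure is easily determined by case analysis on the target subset $P$, thanks to the row/column RPS pattern) or within one of the new constant slabs (even more trivial to determine). Merging any two players into a super-player preserves determinacy because the super-player can still collectively enforce any single outcome on one of the enlarged $3\times 3$ or $2\times 2$ blocks, while adding the safe strategies only increases the super-player's power to enforce.

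For the failure of Nash equilibrium under the cyclic linear preferences, the argument is the one already present in Proposition~\ref{prop:3sm1}: within every $3\times 3$ RPS cross-section the three players are locked into a rotating improvement cycle, and one verifies that the newly added safe strategies never provide a stable escape because each safe strategy fixes a single outcome, against which one of the other two players can always deviate toward a strictly preferred outcome (since the linear preferences give every player two strictly better choices than any fixed outcome except their own maximum, and the three maxima are chosen to disagree cyclically).

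The main obstacle, and the real content of the construction, is showing that every choice of preferences free of three-outcome chains admits a Nash equilibrium. Since $|O|=3$, ``no three-outcome chain'' means that for each player $d$ at least two of $X,Y,Z$ are $\prec_d$-incomparable. I would case-split on the pattern of incomparabilities across $d\in\{a,b,c\}$ and, for each pattern, exhibit a profile in which every player plays one of the new safe strategies targeted at an outcome that is $\prec_d$-maximal (or $\prec_d$-incomparable to every other outcome) for the corresponding player. The delicate point is to arrange the outcome function on the product of safe strategies so that, simultaneously for all three players, the induced outcome is $\prec_d$-maximal or $\prec_d$-unimprovable for $d$; the rigidity imposed by the absence of three-outcome chains (any player has at most one strictly dominating outcome) is exactly what makes such a simultaneous choice possible by a pigeonhole over $\{X,Y,Z\}^3$.
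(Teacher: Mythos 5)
Your construction breaks down at the fourth bullet, and the failure is structural rather than a fixable detail. If player $d$ has a ``safe'' strategy whose outcome is constant along the two opponents' axes, then whenever $d$ plays it the two opponents are \emph{automatically} stable: no deviation of theirs changes the outcome at all, so none can be improving. Hence the profile in which player $a$ plays the safe strategy pinning the outcome to $X$ --- which is $a$'s $<_a$-maximum under $Z<_aY<_aX$ --- is a Nash equilibrium of the game with the cyclic linear preferences: $a$ cannot improve on its top outcome and $b,c$ cannot move the outcome. Your claim that ``one of the other two players can always deviate toward a strictly preferred outcome'' conflates preferring another outcome with being able to reach it by a unilateral deviation. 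This is not a peripheral issue: the $6\times6\times6$ structure of Proposition~\ref{prop:3sm1} already has \emph{no} Nash equilibrium for the win--lose-style preferences $(1,0,0),(0,1,0),(0,0,1)$, which are free of three-outcome chains, so it violates the third bullet; any repair that hands an individual player the power to pin down a single outcome restores equilibria for the cyclic preferences and violates the fourth bullet. The whole difficulty of the statement is to thread this needle, and constant slabs cannot do it.

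The paper's proof resolves the tension with a deliberately asymmetric $4\times7\times7$ cuboid: one $4\times3\times3$ rock--paper--scissors cylinder plus several $2\times2\times7$ and $2\times2\times3$ cylinders with $2\times2$ cross-sections of the form $\bigl(\begin{smallmatrix}X&Y\\Y&X\end{smallmatrix}\bigr)$, $\bigl(\begin{smallmatrix}X&Z\\Z&X\end{smallmatrix}\bigr)$, $\bigl(\begin{smallmatrix}Y&Z\\Z&Y\end{smallmatrix}\bigr)$. There, only \emph{pairs} of players can enforce outcomes (which is what makes the slicing/merging determinacy and the third bullet work), no single player can secure their top outcome (which is what kills the equilibrium under the cyclic preferences), and the third bullet is not obtained by a pigeonhole argument but by an explicit table exhibiting one equilibrium profile for each of the six assignments of distinct preferred outcomes, after disposing of the case where two players share a preferred outcome by letting them enforce it jointly. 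If you want to salvage your plan, the extra strategies must at best restrict the outcome to a two-element set whose selection still depends on the opponents, and you would then have to redo all four verifications; at that point you are essentially rebuilding the paper's thin cylinders.
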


\begin{proof}
Let us define a three-player game structure that is not as symmetric as the one from Proposition~\ref{prop:3sm1}: Player $a$ has four strategies and players $b$ and $c$ have seven strategies each. Informally, the $4\times 7\times 7$ empty cuboid is filled with one $4\times 3\times 3$ cylinder like in Proposition~\ref{prop:3sm1}, four thinner $2\times 2\times 7$ cylinders, and four shorter versions of these, namely $2\times 2\times 3$ cylinders. The different sections of the thinner cylinders look like these:

\[\begin{array}{ccc}
\begin{array}{|c|c|}
\hline X & Y \\
\hline Y & X \\
\hline
\end{array}
&
\begin{array}{|c|c|}
\hline X & Z \\
\hline Z & X \\
\hline
\end{array}
&
\begin{array}{|c|c|}
\hline Y & Z \\
\hline Z & Y \\
\hline
\end{array}
\end{array}
\]

Let us define the outcome function of the game formally below, where each of the five blocks of three lines corresponds to a cylinder.

\[\begin{array}{l}
v(\cdot,5,1):=v(\cdot,6,3):=v(\cdot,7,2):=X\\
v(\cdot,5,2):=v(\cdot,6,1):=v(\cdot,7,3):=Y\\
v(\cdot,5,3):=v(\cdot,6,2):=v(\cdot,7,1):=Z\\
\\
v(1,1,\cdot):=v(2,2,\cdot):=v(1,3,\cdot):=v(2,4,\cdot):=X\\
v(1,2,\cdot):=v(2,1,\cdot):=Y\\
v(1,4,\cdot):=v(2,3,\cdot):=Z\\
\\
v(3,\cdot,5):=v(4,\cdot,4):=X\\
v(3,\cdot,7):=v(4,\cdot,6):=Y\\
v(3,\cdot,4):=v(4,\cdot,5):=v(3,\cdot,6):=v(4,\cdot,7):=Z\\
\end{array}\]

For $1\leq i\leq 3$, set the following:

\[\begin{array}{l}
v(3,1,i):=v(4,2,i):=v(3,3,i):=v(4,4,i):=X \\ 
v(3,2,i):=v(4,1,i):=Y \\
v(3,4,i):=v(4,3,i):=Z \\
\\
v(1,i+4,5):=v(2,i+4,4):=X\\
v(1,i+4,7):=v(2,i+4,6):=Y\\
v(1,i+4,4):=v(2,i+4,5):=v(1,i+4,6):=v(2,i+4,7):=Z\\
\end{array}\]

Typical cross sections of the whole game structure look like the two below, $v(\cdot,\cdot,1)$ on the left-hand side and $v(\cdot,\cdot,7)$ on the right-hand side.

\[\begin{array}{c@{\hspace{2cm}}c}
\begin{array}{|c|c|c|c|c|c|c|}
\hline X & Y & X & Z & X &Y & Z\\
\hline Y & X & Z & X & X & Y & Z\\
\hline X & Y & X & Z & X & Y & Z\\
\hline Y & X & Z & X & X & Y & Z\\
\hline
\end{array}
&
\begin{array}{|c|c|c|c|c|c|c|}
\hline X & Y & X & Z & Y & Y & Y\\
\hline Y & X & Z & X & Z & Z & Z\\
\hline Y & Y & Y & Y & Y & Y & Y\\
\hline Z & Z & Z & Z & Z & Z & Z\\
\hline
\end{array}
\end{array}\]

Equipping the game structure with the preferences $Z<_aY<_aX$ and $X,Z<_bY$ and $X,Y<_cZ$ yields a game without Nash equilibrium.

Nonetheless, slicing the game structure or merging any two of its players yields a determined game structure. It is proved either by similar arguments as in Proposition~\ref{prop:3sm1}, or as follows to show that the cross section $v(\cdot,\cdot,7)$ above to the right is determined: if $Y$ or $Z$ makes player $a$ win, player $a$ wins for sure by playing one of the last two rows; if $Y$ and $Z$ make player $a$ lose, player $b$ wins for sure by playing the last column.

Let us now show that equipping the game structure with preferences that are free of three-outcome chains yields a game with a Nash equilibrium: If two players share a preferred outcome, they can collectively enforce it, which yields a Nash equilibrium, so now let us assume that the three players prefer distinct outcomes. The following array points to one Nash equilibrium for each of the six permutations of $(X,Y,Z)$ as preferred outcomes: 

\[\begin{array}{|c|c|c|c|}
	\cline{1-4}
	a & b & c & \mbox{Nash equilibrium}\\
	\cline{1-4}
	Z & X & Y & 1,1,1\\
	\cline{1-4}
	Z & Y & X  &  1,2,1\\
	\cline{1-4}
	Y & X & Z  &  1,3,1\\
	\cline{1-4}
	Y & Z & X  &  1,4,1\\
	\cline{1-4}
	X & Y & Z  & 4,7,7\\
	\cline{1-4}
	X & Z & Y  &  4,7,6\\
	\cline{1-4}
\end{array}\]
\end{proof}

Proposition~\ref{prop:3sm} does not fully dash the hope for a three-player version of the equilibrium-transfer theorem, though. Indeed, an alternative, even weaker statement could be as follows for a given natural number $n$: "If merging or slicing a given three-player game always yields determined structures, and if replacing the preferences of the game with preferences of height at most $n$ always yields games with a Nash equilibrium, then the original game also has a Nash equilibrium." The case $n=2$ is disproved by Proposition~\ref{prop:3sm}, but the cases $3\leq n$ are still open. If there are counterexamples too, building them and proving their combinatorial property might be rather complex, though.

\section{Conclusion}\label{sect:c}

This article has shown that every determinacy result over a given two-player game structure is transferable into existence of multi-outcome Nash equilibrium over the same game structure. Moreover, when the outcomes are finitely many, the proof provides an algorithm that computes a Nash equilibrium without significant complexity loss compare to the win-lose case.

Contrary to most game-theoretic results, which state that every game of a given class of games has some property, this result is a higher-order theorem: it states that every class of games that is derived from any game structure has itself some property.

If the heights of the preferences of the two players are finite, the equilibrium transfer holds regardless of the game structure; furthermore, if the structure has countably many strategies, the finite-height condition can be relaxed and phrased as absence of infinite ascending sequences, which was the hardest to prove in this article. 

Although counterexamples from Section~\ref{sect:lim-order-set} show that these conditions are useful, there is still room for fine-tuning. In particular, it is still open whether the following is a sufficient condition for equilibrium transfer: "the strategy set of one player is countable and the preferences of the players have no infinite sequences (even descending)".

Section~\ref{sect:app-et} gave three examples of applications of the equilibrium-transfer theorem. Apart from the generalisation of positional determinacy of parity games, which is new, the two other obtained results are weaker than existing results; but the key point here is, however, that the three applications are almost effortless, and that the same would hold for further applications.

The two above-mentioned existing generalisations of Borel determinacy and finite-memory determinacy of Muller games are indeed more general than what can be obtained by application of the equilibrium-transfer theorem, because they hold in a multi-player setting. The proofs are ad hoc, though, which raises the question of a uniform equilibrium-transfer theorem for games with three or more players. A natural attempt is to replace the determinacy condition with existence of Nash equilibrium in some simpler derived games. Counterexamples in Section~\ref{sect:3player} disproved some simple variants of such an attempt, but existence of a slightly more complex variant is still an open question. Alternatively, one may try to add strong conditions on the structure, \textit{e.g.}, after noticing that the players play sequentially in both Muller games and the games used for Borel determinacy. 

\medskip

Unexpectedly, the finite-height condition of Theorem~\ref{thm:et} leads to an interesting general phenomenon about preferences in game theory: contrary to a widespread belief, linear orders do not account for partial orders. Indeed the remark below considers two finite-height preferences, therefore fulfilling Condition~\ref{cond:thm-et2} of Theorem~\ref{thm:et}, yet for all possible linear extensions of these preferences, equilibrium transfer does not hold!

\begin{remark}\label{rem:total}
Let $\prec_1$ and $\prec_2$ be two binary relations over $\mathbb{N}$ that are defined by $2n\prec_1 2n+1$ and $\prec_2:=\prec_1^{-1}$. For all $<_1$ and $<_2$ linear extensions of $\prec_1$ and $\prec_2$ respectively, there exists a game satisfying Condition~\ref{cond:thm-et1} of Theorem~\ref{thm:et}, but without Nash equilibrium.
\end{remark}

\begin{proof}
If the inverse of $<_1$ is not a well-order, the game $\langle \{1\},\mathbb{N},\mathbb{N},id,\{<_1\}\rangle$ has no Nash equilibrium although the induced structure is determined, so let us assume that the inverse of $<_1$ is a well-order. Since the sequence $(2n)_{n\in\mathbb{N}}$ has no $<_1$-increasing subsequence, it has a $<_1$-decreasing subsequence $(2\phi(n))_{n\in\mathbb{N}}$ (as a consequence of Ramsey Theorem). Setting $a:=2\phi(0)+1$ and $b:=2\phi(0)$ and $x_n=2\phi(n+1)$ embeds the preferences from Proposition~\ref{prop:lim-uncountable} into $<_1$ and $<_2$, respectively, so the witness game from Proposition~\ref{prop:lim-uncountable} also witnesses the remark at hand.
\end{proof}

Nonetheless, it is often very convenient to consider linearly ordered preferences only, when actually done without loss of generality. Remark~\ref{rem:total} above just exemplifies that one ought to be very cautious because a loss of generality may actually occur.

\section*{Acknowledgement}
I thank Achim Blumensath and Michael Ummels for discussions on parity games and the like, Alexander Kreuzer for explanations on Ramsey's theorem, an anonymous referee in particular for a remark that helped shorten and clarify the proof of Theorem~\ref{thm:et}, and I am especially grateful to Vassilios Gregoriades for useful discussions and advice, and his determined and determinative help with Proposition~\ref{prop:lim-uncountable}.

\bibliographystyle{plain}
\bibliography{article}

%%
%% Bibliography
%%

%% Either use bibtex (recommended), but commented out in this sample

%\bibliography{dummybib}

%% .. or use bibitems explicitely

\end{document}